\documentclass[11pt,reqno]{amsart}

\usepackage{amsfonts}
\usepackage{amssymb}
\usepackage{amsmath}
\usepackage{amsthm}
\usepackage[utf8]{inputenc}
\usepackage[T1]{fontenc}
\usepackage{mathrsfs}
\usepackage{microtype}
\usepackage{xcolor}
\usepackage[caption=false]{subfig} 
\usepackage[colorlinks=true, urlcolor=blue]{hyperref}
\usepackage{booktabs} 
\usepackage{array}  
\usepackage{enumitem}
\usepackage{mathtools} 
\usepackage{graphicx}
\usepackage{subcaption}
\usepackage{tabularx}  
\usepackage{multirow}
\usepackage{fullpage}
\usepackage{pgfplots}
\pgfplotsset{compat=1.15}
\usetikzlibrary{arrows}
\usetikzlibrary{arrows.meta}
\usetikzlibrary{positioning,fit}
\usetikzlibrary{pgfplots.groupplots} 
\usetikzlibrary{external}
\tikzset{external/optimize=true}

\DisableLigatures{encoding = *, family = * }

\newtheorem{theorem}{Theorem}

\newcommand{\NO}[1]{\widehat{#1}}

\newcommand{\RR}{\mathbb{R}}
\newcommand{\CC}{\mathbb{C}}

\numberwithin{equation}{section}
\numberwithin{theorem}{section}

\title{Learning the Riccati solution operator for time-varying LQR via Deep Operator Networks}
\subjclass[2020]{68T07,93B52,93D15,93D40}
\keywords{Riccati equation, Operator Learning, DeepONets, Optimal control}

\author{Jun Chen\textsuperscript{\,$\ast$}}  

\address{\textsuperscript{$\ast$}\, School of Mathematics and Statistics, Beijing Institute of Technology, 100081 Beijing, China.
	\newline \indent \textsuperscript{$\dagger$}\, Chair of Computational Mathematics, DeustoTech, University of Deusto, Avenida de las Universidades 24, 48007 Bilbao, Basque Country, Spain.}

\email{chenjun\_bcbm@163.com}
\thanks{This project has received funding from the European Research Council (ERC) under the European Union's Horizon 2030 research and innovation programme (grant agreement NO: 101096251-CoDeFeL). UB was partially supported by the Grant PID2023-146872OB-I00-DyCMaMod of MICIU (Spain) and by the COST Actions ``CA24122 - multiscale Stochastics, Patterns, and Analysis of Combinatorial Environments'' and ``CA24136 - Interactions between Control Theory and Machine Learning''. CJ and JW were supported by the National Natural Science Foundation of China under Grant 92471108 and 12131008, and the Program of China Scholarship Council(Grant No.202506030033)}

\author{Umberto Biccari\textsuperscript{\,$\dagger$}}
\email{umberto.biccari@deusto.es}

\author{Junmin Wang\textsuperscript{\,$\ast$}}
\email{jmwang@bit.edu.cn}

\begin{document}

\begin{abstract}
We propose a computational framework for replacing the repeated numerical solution of differential Riccati equations in finite-horizon Linear Quadratic Regulator (LQR) problems by a learned operator surrogate. Instead of solving a nonlinear matrix-valued differential equation for each new system instance, we construct offline an approximation of the associated solution operator mapping time-dependent system parameters to the Riccati trajectory. The resulting model enables fast online evaluation of approximate optimal feedback laws across a wide class of systems, thereby shifting the computational burden from repeated numerical integration to a one-time learning stage.

From a theoretical perspective, we establish control-theoretic guarantees for this operator-based approximation framework. In particular, we derive bounds quantifying how operator approximation errors propagate to feedback performance, trajectory accuracy, and cost suboptimality, and we prove that exponential stability of the closed-loop system is preserved under sufficiently accurate operator approximation. These results provide a systematic framework to assess the reliability of data-driven approximations in optimal control.

On the computational side, we design tailored DeepONet architectures for matrix-valued, time-dependent problems and introduce a progressive learning strategy to address scalability with respect to the system dimension. Numerical experiments on both time-invariant and time-varying LQR problems demonstrate that the proposed approach achieves high accuracy and strong generalization across a wide range of system configurations, while delivering substantial computational speedups compared to classical Riccati solvers. The method offers an effective and scalable alternative for parametric and real-time optimal control applications.

More broadly, the proposed approach illustrates how operator learning can be used to build reusable surrogates for nonlinear matrix differential equations arising in control and dynamical systems.

\end{abstract}

\maketitle  
\section{Introduction and motivations}

Optimal control plays a central role in science and engineering, offering a systematic framework for designing control strategies that regulate dynamical systems while balancing performance and control effort \cite{kirk2004optimal,kwakernaak1972linear,sontag2013mathematical,trelat2005controle}.

Among the most widely used formulations, the Linear Quadratic Regulator (LQR) provides a tractable and robust methodology, with applications including robotics, aerospace engineering, signal processing, and autonomous systems \cite{alcala2018autonomous,dhingra2025modeling,elkhatem2022robust,gu2006generalized,peng2025linear,su2024lqr}. Its formulation is characterized by the Riccati equation, which determines the optimal feedback controller.

Despite its classical nature, solving LQR problems remains computationally demanding in modern settings where system parameters vary or must be evaluated repeatedly. In the case of linear time-varying systems, the optimal feedback law is characterized by the Differential Riccati Equation (DRE), a nonlinear matrix-valued differential equation that must be solved backward in time. Classical numerical methods for Riccati equations include direct and iterative schemes based on matrix factorizations, invariant subspace techniques, and backward integration methods, whose computational cost typically scales cubically with the state dimension \cite{bittanti1991riccati,lancaster1995algebraic}.

While this approach is effective for solving a single instance of an LQR problem, it becomes a major limitation in settings where one must evaluate optimal feedback laws for a large number of system configurations. Such situations arise naturally in parametric control, uncertainty quantification, and real-time applications like Model Predictive Control (MPC), where optimal control problems must be solved sequentially in real time for evolving system states and parameters. In these contexts, each new parameter instance requires solving a Riccati equation from scratch, leading to a computational cost that grows linearly with the number of instances and rapidly becomes prohibitive in high-dimensional settings.

This computational bottleneck motivates the search for alternative paradigms that can exploit the structure of the problem across different parameter configurations. In particular, the repeated solution of Riccati equations can be replaced by the evaluation of a surrogate model that directly maps system parameters \textendash\, such as time-dependent matrices $(A(t), B(t), Q(t), R(t))$ \textendash\, to the associated solution. 

Under this perspective, in this work, we adopt an Operator Learning (OL) approach in which the Riccati equation is viewed as a nonlinear mapping between function spaces. Instead of solving the DRE for each new instance, we aim to approximate this solution operator once, and then reuse it to enable fast evaluation of approximate optimal feedback laws for previously unseen systems. This leads to an offline-online computational strategy, where the main computational effort is performed during a training stage, and the evaluation for new parameter configurations becomes essentially instantaneous.

To realize this operator-based framework, we employ Deep Operator Networks (DeepONets \cite{lanthaler2022error,lu2021learning}), which are specifically designed to learn mappings between infinite-dimensional function spaces. DeepONets combine a branch network that encodes the input functions with a trunk network that represents the output dependence, providing a flexible architecture for approximating operators arising in differential equations. This enables a single trained model to generalize across a broad class of systems with varying dynamics and cost structures. In contrast to existing learning-based approaches for solving differential equations and control problems \textendash, such as Physics-Informed Neural Networks (PINNs) \cite{quero2024physics,raissi2019physics}, originally developed for PDE approximation, and Reinforcement Learning (RL) methods \cite{cui2025learning,yu2026stochastic} \textendash, which generally require instance-wise training or optimization, our approach targets the full Riccati solution operator, allowing a single trained model to generalize across a broad class of systems.

More precisely, existing approaches differ fundamentally in how they treat parametric variability. PINNs are typically trained to approximate the solution of a single differential equation instance by enforcing the governing equations in the loss function. As a consequence, each new system configuration requires retraining or fine-tuning, which limits their applicability in settings involving large families of parameter-dependent problems.

RL methods, on the other hand, aim to learn control policies through interaction with the system, often without explicitly exploiting the underlying Riccati structure. While RL provides a flexible framework for control, it generally focuses on policy approximation rather than on learning the mapping from system parameters to optimal solutions, and may require extensive sampling or exploration for each new environment.

In contrast, the OL approach adopted in this work targets directly the parametric solution map associated with the Riccati equation. By learning this mapping offline, the resulting model can be reused across a wide range of system configurations without retraining, enabling an amortized computational strategy that fundamentally differs from instance-wise approaches.

While we focus on the Riccati equation arising in LQR problems, the proposed approach is representative of a broader computational paradigm: learning reusable solution operators for nonlinear matrix-valued differential equations in control and dynamical systems. In this perspective, the Riccati equation serves as a canonical testbed where both numerical efficiency and control-theoretic properties, such as stability and optimality, can be rigorously assessed.

A key challenge in this setting is to ensure that the learned approximation preserves the fundamental properties of the optimal solution, in particular stability and performance guarantees of the resulting closed-loop system. To address this issue, we establish a rigorous link between the approximation error of the learned Riccati operator and the behavior of the induced feedback law. More precisely, we show that errors at the operator level propagate in a controlled manner to the feedback gain, the closed-loop trajectories, and the associated cost functional. In particular, the performance loss of the learned controller can be explicitly bounded in terms of the operator approximation error, and exponential stability of the closed-loop system is preserved provided this error remains sufficiently small. This establishes a complete error propagation chain from operator approximation to control performance, providing a control-theoretic interpretation of OL in this setting.

The proposed framework is validated through numerical experiments demonstrating that the learned operator achieves high accuracy and near-perfect stabilization rates, while significantly reducing computational cost compared to classical solvers. Furthermore, we show that the approach scales to higher-dimensional systems and can exploit structural similarities across dimensions through a progressive learning strategy, whose empirical performance suggests the presence of low-dimensional structure in the underlying operator.

\smallskip
\noindent The main contributions of this work can be summarized as follows:
\begin{itemize}
    \item[1.] we introduce an operator-based computational framework for LQR, replacing repeated numerical solution of Riccati equations by a learned solution operator;
    \item[2.] we design a DeepONet-based architecture tailored to matrix-valued, time-dependent control problems;
    \item[3.] we establish a control-theoretic error propagation framework linking operator approximation error to feedback gain accuracy, trajectory deviation, cost suboptimality, and closed-loop stability;
    \item[4.] we demonstrate the effectiveness and scalability of the approach through extensive numerical experiments.
    \end{itemize}

The remainder of the paper is organized as follows. Section \ref{sec:problem} introduces the finite-horizon LQR problem and formulates the associated Riccati equation from an OL perspective. Section \ref{sec:deeponet} presents the DeepONet-based approximation framework, including the network architecture, data generation strategies, and a progressive learning approach to address scalability across dimensions. Section \ref{sec:math} provides theoretical guarantees, including approximation results for the Riccati operator and an analysis of error propagation and closed-loop stability. Section \ref{sec:experiments} reports numerical experiments illustrating the accuracy, stability, and computational efficiency of the proposed method in both time-invariant and time-varying settings. Finally, Section \ref{sec:conclusions} gathers our conclusions and propose new future research directions.

\section{Problem formulation and operator viewpoint}\label{sec:problem}

From an engineering perspective, LQR provides a systematic framework to design feedback controllers that achieve an optimal trade-off between system performance and control effort. More precisely, the objective of LQR is to regulate the state of a dynamical system toward a desired equilibrium (typically the origin) while minimizing a quadratic cost functional that penalizes both deviations of the state and the magnitude of the control input.

LQR has been extensively used across a wide range of engineering domains. In aerospace engineering, it is employed for attitude control and trajectory stabilization of aircraft and spacecraft, where maintaining stability under perturbations is critical \cite{choi1999lqr,stevens2015aircraft}. In robotics, LQR-based controllers are used for motion planning, balancing problems (e.g., inverted pendulum or humanoid robots) \cite{su2024lqr,klemm2020lqr}, and trajectory tracking \cite{kumar2013robust}. In electrical and mechanical systems, it appears in vibration suppression, power system regulation, and process control \cite{mohammed2018active,singh2015decentralized,slimane2025deep}. More recently, LQR also plays a central role in modern control architectures such as MPC, where it serves as a local optimal controller or terminal cost approximation \cite{kalabic2020constraint,he2016dual}.

In this work, we consider the finite-horizon LQR problem for continuous-time linear time-varying systems. Let $T > 0$ be a fixed time horizon. The system dynamics are given by
\begin{equation}\label{eq:state}
    \begin{cases}
        \dot{x}(t) = A(t)x(t) + B(t)u(t),\quad t\in[0,T],
        \\
        x(0) = x_0,
    \end{cases}
\end{equation}
where $x(t) \in\RR^n$ denotes the state and $u(t) \in\RR^m$ with $m\leq n$ the control input. The matrices $A(\cdot)\in\RR^{n\times n}$ and $B(\cdot)\in\RR^{n\times m}$ are assumed to be continuous in time.

In this context of time-varying systems, the finite-horizon LQR problem is particularly relevant for transient control tasks, where system parameters evolve over time or where control objectives are defined over a finite time interval. Examples include trajectory optimization \cite{reddy2024learning}, time-dependent resource allocation \cite{xu2018optimal}, and adaptive control in uncertain environments \cite{fong2018dual}.

\smallskip
\noindent The LQR controller is designed by minimizing the quadratic cost functional
\begin{equation}\label{eq:functional}
    J(u) = \int_0^T \Big(x(t)^\top Q(t)x(t) + u(t)^\top R(t)u(t) \Big)\, dt + x(T)^\top P_T x(T),
\end{equation}
where $Q(\cdot)\in\RR^{n\times n}$ and $R(\cdot)\in\RR^{m\times m}$ are continuous and symmetric weighting matrices, with $Q(\cdot)$ positive semi-definite and $R(\cdot)$ positive definite, and $P_T\in\RR^{n\times n}$ is a positive definite and time-invariant terminal cost matrix.

The structure of this cost functional \eqref{eq:functional} reflects two competing design requirements. On the one hand, the term $x^\top Q x$ enforces state regulation, promoting accuracy, stability, and tracking performance. On the other hand, the term $u^\top R u$ penalizes control effort, preventing excessive actuator usage, energy consumption, or aggressive control actions that may be infeasible in practice. The choice of the weighting matrices $Q$, $R$, and $P_T$ therefore encodes engineering specifications such as precision, robustness, and resource constraints.

One of the key advantages of the LQR framework is that it yields a closed-loop linear state feedback law that is optimal and stabilizing under mild assumptions. This makes it particularly attractive in real-time applications, where computational simplicity and robustness are essential. In more detail, it is well known \cite{kwakernaak1972linear} that, under standard stabilizability and detectability assumptions (see Section \ref{sec:math}), the optimal control is given in feedback form
\begin{align*}
    u^\ast(t)=-K^\ast(t)x(t), \quad\text{ with } K^\ast(t)=R^{-1}(t)B^\top(t)P^\ast(t),    
\end{align*}
where $P^\ast(t)$ is the unique symmetric positive definite solution of the Differential Riccati Equation (DRE)
\begin{equation}\label{eq:Riccati}
    \begin{cases}
        -\dot{P}(t) = A(t)^\top P(t) + P(t)A(t) - P(t)B(t)R^{-1}(t)B(t)^\top P(t) + Q(t), \; t\in[0,T]
        \\[2pt]
        P(T) = P_T.
    \end{cases}
\end{equation}

Despite its conceptual simplicity, the practical deployment of LQR in modern applications is often hindered by computational challenges. As discussed in the introduction, solving the Riccati equation \eqref{eq:Riccati} repeatedly for varying system parameters can become prohibitively expensive in high-dimensional or real-time settings. To overcome this bottleneck, in this work, we adopt an operator viewpoint. We denote by
\begin{align*}
    \Theta = (A(\cdot), B(\cdot), Q(\cdot), R(\cdot), P_T)
\end{align*}
the collection of system parameters, and we define the mapping
\begin{align*}
    \mathcal F : \Theta \mapsto P(\cdot),    
\end{align*}
which associates to each parameter instance the corresponding solution of the Riccati equation.

From a computational viewpoint, this operator encapsulates the full dependence of the optimal control law on the system parameters. Classical approaches evaluate this mapping by solving the Riccati equation \eqref{eq:Riccati} numerically for each input $\Theta$. In contrast, our objective is to approximate this operator once and reuse it, enabling efficient evaluation across multiple problem instances.

\section{DeepONet-based approximation of the Riccati operator}\label{sec:deeponet}

We present a DeepONet-based framework to approximate the Riccati solution operator introduced in Section \ref{sec:problem}. The goal is to construct a neural network that maps system parameters directly to the corresponding solution of the Riccati equation, enabling fast evaluation for new instances. 

\subsection{Operator Learning formulation: the DeepONet architecture}

Recall that we aim to approximate the mapping 
\begin{align*}
    \mathcal F : \Theta \mapsto P(\cdot),    
\end{align*}
which associates to each system configuration $\Theta = (A(\cdot), B(\cdot), Q(\cdot), R(\cdot), P_T)$ the solution of the corresponding Riccati equation.

In practice, both the input functions and the output trajectory must be represented in finite-dimensional form. To this end, as we will see, the time-dependent matrices are sampled or parametrized using suitable discretizations or basis expansions, leading to a finite-dimensional representation of the input. Similarly, the solution $P(t)$ is evaluated at a set of time points or represented through a functional basis.

Under this representation, the OL problem is reduced to a supervised learning task: given a dataset of input–output pairs $(\Theta_i, P_i)$, where $P_i$ is the solution of the Riccati equation corresponding to $\Theta_i$, the objective is to construct a model that approximates the mapping $\mathcal F$ and generalizes to unseen parameter instances.

To approximate $\mathcal F$, we employ a DeepONet, which is designed to learn mappings between function spaces. The architecture consists of two main components: a branch network and a trunk network. The branch network processes the input representation of the system parameters $\Theta$, encoding them into a finite-dimensional feature vector. The trunk network takes as input the time variable $t$ and outputs a set of basis functions that capture the temporal structure of the solution.

The final output is obtained by combining the outputs of the branch and trunk networks. More precisely, the predicted solution is represented as
\begin{align*}
    \mathcal G(\Theta)(t) = \sum_{k=1}^p \beta_k(\Theta)\tau_k(t) \sim P(t),
\end{align*}
where $\beta_k(\Theta)$ are coefficients produced by the branch network and $\tau_k(t)$ are basis functions produced by the trunk network.

In order to handle matrix-valued outputs, the network is designed so that the final layer produces a vector representation that can be reshaped into a matrix of appropriate dimension. The architecture can be adapted to different problem sizes by adjusting the number of basis functions and network widths.

The DeepONet architecture provides a flexible framework for approximating nonlinear mappings between function spaces, such as the Riccati solution map $\mathcal F$. Its theoretical foundation relies on universal approximation results for operators \cite{deng2022approximation,lu2021learning}, which ensure that sufficiently regular mappings can be approximated with arbitrary accuracy on compact subsets.

It is worth emphasizing that this OL formulation differs conceptually from classical neural PDE solvers and control learning approaches. In particular, rather than approximating a single solution (as in PINNs) or directly learning a control policy (as in RL), the objective here is to approximate the full mapping from system parameters to Riccati solutions. This distinction is crucial in applications where the same model must be evaluated repeatedly for different system configurations, as it enables a clear separation between an offline training phase and a fast online evaluation stage.

In the present setting, the mapping $\mathcal F$, which associates system parameters to the corresponding solution of the Riccati equation, is well-defined and depends continuously on the input data. Moreover, under standard assumptions on stabilizability and positive definiteness, this dependence is locally Lipschitz, as follows from classical sensitivity results for Riccati equations \cite{abou2012matrix,bittanti1991riccati}. In this context, we have by \cite[Theorem 1]{lu2021learning} that, for any $\varepsilon > 0$, there exists a DeepONet $\mathcal G$ such that for all $\Theta$ and $t \in [0,T]$,
\begin{align*}
    \big\| \mathcal F(\Theta)(t) - \mathcal G(\Theta)(t) \big\| < \varepsilon.
\end{align*}

\subsection{Data generation and training strategy}

The performance of the proposed OL approach, in particular the level of approximation $\varepsilon$ that DeepONet can achieve, depends critically on the construction of representative training data \cite{lanthaler2022error}. In this section, we describe how datasets are generated for both time-invariant and time-dependent systems, as well as the training procedure used to fit the model.

\subsubsection{Data generation for time-invariant parameters}

For time-invariant parameters $(A,B,Q,R)$, the Riccati equation \eqref{eq:Riccati} reduces to the Algebraic Riccati Equation (ARE)
\begin{align}\label{eq:RiccatiAlgebraic}
    A^\top P + PA - PBR^{-1}B^\top P + Q = 0.
\end{align}

In this case, our task reduces to learn the map $(A,B,Q,R)\mapsto P$. Notice that this is no longer an operator but rather a matrix-valued function. Nevertheless, the DeepONet methodology can still be applied.

To generate training samples, we shall consider structured families of linear systems $(A,B)$ and design matrices $(Q,R)$ that ensure controllability and cover a broad range of dynamical behaviors. 

For what concerns $Q$ and $R$, we will restrict them to be diagonal matrices with non-negative and positive diagonal entries, respectively, to preserve the semi-positive and positive nature required to build LQR controllers. This assumption simplifies the data generation procedure and allows for a controlled exploration of the influence of state and control weights. Notice that, while symmetric matrices are diagonalizable, restricting to diagonal structures does not cover the full generality of possible couplings between state components, and should therefore be interpreted as a modeling choice rather than a general representation.

For the dynamics matrices $A$ and $B$, instead, we adopt parametrizations based on the Brunovsky canonical form, which provides a complete characterization of controllable linear systems up to similarity transformations. This is based on a fundamental property of control systems \cite{zhou1996robust,kwakernaak1972linear}: for any controllable pair $(A,B)$, there exists an invertible change of coordinates under which the system can be decomposed into a block-diagonal structure consisting of independent controllable subsystems. More precisely, we can find a matrix $T\in\RR^{n\times n}$ such that
\begin{align*}
    (T^{-1} A T,\, T^{-1} B) = (A_c, B_c)
\end{align*}
with 
\begin{align}\label{eq:brunovsky1}
    A_c = \mathrm{diag}(A_{c,1}, \dots, A_{c,r}),\quad B_c = \mathrm{diag}(B_{c,1}, \dots, B_{c,r}),    
\end{align}
and where each $(A_{c,i}, B_{c,i})$ is a single-input controllable canonical block of size $n_i$, with
\begin{align}\label{eq:brunovsky2}
A_{c,i} = \begin{bmatrix}
    0 & 1 & & 0 \\
    & \ddots & \ddots & \\
    -a_{i,0} & \cdots & -a_{i,n_i-2} & -a_{i,n_i-1}
\end{bmatrix},\quad
B_{c,i} = \begin{bmatrix} 0 \\ \vdots \\ 1 \end{bmatrix},\quad n = \sum_{i=1}^r n_i.
\end{align}

This representation offers two key advantages for dataset generation. First, it ensures that all sampled systems are controllable by construction, which is a fundamental requirement for the well-posedness of the LQR problem and the existence of a unique positive definite solution to the Riccati equation. Second, it provides a systematic parametrization of the space of linear dynamics through the coefficients of the companion matrices, enabling controlled exploration of a wide range of dynamical behaviors.

From an approximation perspective, this parametrization induces a low-dimensional yet expressive representation of the space of admissible systems. Since similarity transformations do not affect the qualitative properties of the Riccati solution (e.g., stabilizability and optimal cost structure), learning the operator in Brunovsky coordinates captures the essential features of the mapping $(A,B,Q,R)\mapsto P$ without redundancy. This allows the network to focus on intrinsic dynamical features rather than superficial coordinate variations.

Overall, the use of Brunovsky canonical forms leads to a dataset that is both structurally rich and mathematically controlled, ensuring coverage of representative system behaviors while maintaining consistency with the theoretical assumptions underlying the Riccati equation.

\subsubsection{Data generation for time-dependent parameters} 

For time-dependent parameters, the previous approach based on the Brunovsky decomposition is no longer applicable. We shall then define a different strategy to generate the matrices $A(t)$, $B(t)$, $Q(t)$ and $R(t)$. 

As in the time-independent case before, the dataset must be representative enough so to ensure good generalization performances for DeepONet once trained. To adhere to this principle, the dataset is constructed using a truncated trigonometric expansion of the system matrices. In particular, we will consider matrices of the form
\begin{equation}\label{eq:trigonometric}
    \begin{array}{l}
        \displaystyle A(t) = A_0 + \sum_{i=1}^{r_{\text{base}}} \Big(C_{1i} \phi_i(t) + C_{2i} \psi_i(t)\Big),
        \\
        \displaystyle B(t) = B_0 + \sum_{i=1}^{r_{\text{base}}} \Big(D_{1i} \phi_i(t)+ D_{2i} \psi_i(t)\Big), 
        \\
        \displaystyle Q(t) = M^\top(t) M(t) \quad\text{ with }\quad  M(t) = M_0 + \sum_{i=1}^{r_{\text{base}}} \Big(E_{1i} \phi_i(t)+ E_{2i} \psi_i(t)\Big),     
    \end{array}
\end{equation}
with randomly sampled coefficients 
\begin{displaymath}
    \begin{array}{lll}
        A_0\in\RR^{n\times n}, & B_0\in\RR^{n\times m}, & M_0 \in\RR^{n\times n}
        \\
        C_{ki}\in\RR^{n\times n}, & D_{ki}\in\RR^{n\times m}, & E_{ki}\in\RR^{n\times n}    
    \end{array}, \quad i= 1,\dots, r_{\text{base}},\; k=1,2,
\end{displaymath}
and where we have denoted 
\begin{align*}
    \phi_i(t) = \cos(i\pi t) \quad\text{ and }\quad \psi_i(t)=\sin(i\pi t).    
\end{align*}
The control weighting matrix $R(t)$, instead, will be assumed to be the identity, i.e., $R(t) = R = I_m$. 

This construction of the data can be interpreted as sampling from a structured low-frequency manifold in function space induced by truncated Fourier expansions. In this perspective, the matrices $A(\cdot)$, $B(\cdot)$ and $Q(\cdot)$ are not arbitrary time-dependent functions, but belong to a finite-dimensional subspace spanned by smooth basis functions. This aligns with the OL framework, where the objective is to approximate mappings between compact subsets of infinite-dimensional spaces.

From an approximation-theoretic viewpoint, Fourier-type bases provide dense representations of smooth functions, ensuring that the sampled trajectories capture a wide range of temporal behaviors while remaining controlled in regularity. The truncation level $r_{\text{base}}$ thus defines the intrinsic dimension of the function class and acts as a complexity parameter for the input space. 

From a control perspective, this construction ensures that the generated systems exhibit bounded variation and regular dynamics, which guarantees well-posedness of the differential Riccati equation and stability of the numerical solvers. As a consequence, the dataset can be viewed as a compact, low-frequency subset of admissible system trajectories, on which the Riccati operator is learned.

Finally, we emphasize that this choice is not restrictive: by increasing the number of basis functions, one can approximate increasingly complex time dependencies. Therefore, the proposed dataset construction provides a systematic and scalable framework for generating representative training samples for time-varying LQR problems. In this sense, the trigonometric basis acts as a bridge between infinite-dimensional function spaces and their finite-dimensional representations used in the learning framework.

\subsubsection{Training procedure}

The model is trained using supervised learning, where the objective is to minimize the discrepancy between the predicted and true Riccati solutions over the training dataset. The loss function we employed is Mean Square Error (MSE), defined as
\begin{displaymath}
    \begin{array}{ll}
        \displaystyle\mathcal L = \frac{1}{N} \sum_{i=1}^N \|P_i^{\text{pred}} - P_i^{\text{true}}\|_F^2, & \quad\text{time-independent parameters}
        \\
        \displaystyle \mathcal L = \frac{1}{NM} \sum_{i=1}^N\sum_{m=1}^M \|P_i^{\text{pred}}(t_m) - P_i^{\text{true}}(t_m)\|_F^2, & \quad\text{time-dependent parameters}
    \end{array}
\end{displaymath}
where $P_i^{\text{true}}$ denotes the reference solution, $P_i^{\text{pred}}$ the network prediction, and $\{t_m\}_{m=1}^N$ are the discretization points of the time interval $[0,T]$.

Training is performed using standard optimization algorithms (e.g., Adam), with a suitable choice of learning rate, batch size, and number of epochs. The dataset is split into training and testing subsets to evaluate generalization performance. Complete implementation details will be given in Section \ref{sec:experiments}.

\subsection{Progressive Operator Learning across dimensions}\label{subsec:progressive}

A central challenge in OL is scalability with respect to the system dimension. In our case, as the state dimension increases, both the inputs $(A,B,Q,R)$ and the output $P$ grow quadratically, making direct learning of the Riccati operator increasingly expensive in terms of data, model size, and training time.

To address this issue, we propose a \textbf{Progressive Operator Learning} strategy, aimed at transferring knowledge learned in low-dimensional settings to higher-dimensional problems. The key idea is that the Riccati solution operator may exhibit structural similarities across dimensions, so that part of its complexity can be captured in a lower-dimensional representation and reused when moving to larger systems.

This intuition is partly supported by structural considerations in the time-invariant setting. In this case, controllable linear systems admit a canonical representation via the Brunovsky canonical form, under which the dynamics decompose into chains of integrators. The associated Riccati solutions inherit a hierarchical structure reflecting these subsystems. From this perspective, increasing the system dimension can often be interpreted as extending an existing structure rather than introducing entirely new dynamics, which provides a rationale for transferring a learned operator across dimensions.

However, this argument relies on the availability of such canonical decompositions and therefore applies primarily to the time-invariant case. In the time-dependent setting, no analogous global normal form is available, and the system matrices may exhibit more complex temporal variations. As a consequence, the assumption that the Riccati operator admits a dimension-robust representation cannot be justified theoretically in the same way.

For this reason, the proposed progressive strategy should be understood as an empirically motivated approach aimed at exploiting potential low-dimensional structure in the Riccati operator across system dimensions. We do not claim the existence of a general dimension-independent representation of the Riccati operator. Instead, the construction can be viewed as a form of latent operator factorization, where the core mapping is captured in a reduced representation and complemented by dimension-specific transformations. The validity of this interpretation is ultimately assessed through numerical experiments presented in Section \ref{sec:experiments}, which indicate that such a reduced representation can capture a significant portion of the operator complexity in the considered regimes.

Let $\mathcal{F}_n$ denote the operator mapping system parameters to the solution of the Riccati equation in dimension $n$. Our objective is to approximate $\mathcal{F}_{n_1}$ for $n_1 > n$ by leveraging a pre-trained approximation $\NO{\mathcal{F}}_n$ of $\mathcal{F}_n$. This is achieved by constructing a composite architecture of the form
\begin{align*}
    \NO{\mathcal{F}}_{n_1} \sim \mathcal{E}_{n_1} \circ \NO{\mathcal{F}}_n \circ \mathcal{I}_{n_1},
\end{align*}
where
\begin{itemize}
    \item $\mathcal{I}_{n_1}$ is an embedding operator mapping high-dimensional inputs into a lower-dimensional latent space;
    \item $\mathcal{E}_{n_1}$ is a lifting operator reconstructing the high-dimensional output.
\end{itemize}

This decomposition separates the learning task into a shared operator representation (captured by $\NO{\mathcal{F}}_n$) and dimension-specific adaptations (captured by $\mathcal{I}_{n_1}$ and $\mathcal{E}_{n_1}$), allowing one to significantly reduce the number of trainable parameters and the associated computational cost. 

From this viewpoint, the progressive architecture can be interpreted as approximating the Riccati operator through a structured low-dimensional latent representation, with embedding and lifting layers acting as dimension-dependent encoders and decoders.

\subsubsection{Architecture and error decomposition}

In practice, we will implement the progressive strategy by combining a pre-trained DeepONet with two additional neural components:
\begin{itemize}
    \item[] \textbf{Embedding layer.} A fully connected network $\mathcal{I}_{n_1}$ that maps the flattened high-dimensional input (namely, concatenated entries of $(A,B,Q,R)$) into a feature vector compatible with the input space of the pre-trained low-dimensional branch network.
    \item[] \textbf{Extension} (\textbf{lifting}) \textbf{layer.} A fully connected network $\mathcal{E}_{n_1}$ that maps the low-dimensional output of the pre-trained model to the higher-dimensional Riccati solution.
\end{itemize}

The parameters of the pre-trained DeepONet are kept fixed, while only the embedding and extension layers are trained on high-dimensional data. This significantly reduces the number of trainable parameters and the associated computational cost.

Moreover, this progressive construction induces a natural decomposition of the approximation error. Denoting by $\mathcal{F}_{n_1}$ the true operator and by $\NO{\mathcal{F}}_{n_1}$ its progressive approximation, we can formally write
\begin{align*}
    \|\mathcal{F}_{n_1}(\Theta) - \NO{\mathcal{F}}_{n_1}(\Theta)\| \leq \|\mathcal{F}_{n_1}(\Theta) - \mathcal{E}_{n_1} \circ \mathcal{F}_n \circ \mathcal{I}_{n_1}(\Theta)\| + \|\mathcal{F}_n - \NO{\mathcal{F}}_n\| + \varepsilon_\text{adapt},
\end{align*}
where the three terms correspond respectively to:
\begin{itemize}
    \item[1.] a \textbf{representation error}, measuring how well the high-dimensional operator can be expressed through the low-dimensional one;
    \item[2.] the \textbf{approximation error} of the pre-trained DeepONet;
    \item[3.] an \textbf{adaptation error}, arising from the training of the embedding and lifting networks.
\end{itemize}

This decomposition highlights that the effectiveness of the progressive strategy depends on the existence of a suitable low-dimensional representation and on the quality of the learned transformations.

\section{Theoretical guarantees for the learned Riccati operator}\label{sec:math}

The goal of this section is to characterize how approximation errors at the level of the Riccati solution operator affect the resulting control law and closed-loop dynamics. In particular, we aim to quantify how perturbations in the operator propagate through the feedback gain to the state trajectories and the associated cost functional. This provides a systematic framework to assess the reliability of learned operators in control applications.

\subsection{Notation}
Denote by $\mathbb{S}^n$ the set of $n\times n$ real symmetric matrices. Let $\mathbb{S}^n_+ \subset \mathbb{S}^n$ and $\mathbb{S}^n_{++} \subset \mathbb{S}^n$ denote the sets of positive semidefinite and positive definite matrices, respectively. 

For $A,B \in \mathbb{S}^n$, we write $A \succeq B$ if $A-B \in \mathbb{S}^n_+$, and $A \succ B$ if $A-B \in \mathbb{S}^n_{++}$. In particular, $A \succeq 0$ (resp. $A \succ 0$) means that $A$ is symmetric positive semidefinite (resp. symmetric positive definite), i.e., $x^\top A x \ge 0$ (resp. $>0$) for all $x \in \mathbb{R}^n \setminus \{0\}$. Define
\begin{align*}
    \Sigma_{++}^m \coloneqq \Big\{ R:[0,T] \to \mathbb{S}^m_{++} \;\big|\; \exists \rho>0, \; \text{s.t. } R(t) \succeq \rho I_m, \text{ for all } t \in [0,T] \Big\},
\end{align*}
where $I_m$ denotes the $m\times m$ identity. Finally, for a matrix $A \in \mathbb{R}^{n \times m}$ we denote by $\|A\|_F$ the Frobenius norm, and for a vector $x\in\mathbb{R}^n$ we denote by $\|x\|_2$ the Euclidean norm. 

\subsection{Approximation of the Riccati operator}

The approximation of the Riccati operator by DeepONets is justified by combining regularity properties of the Riccati map with universal approximation results for OL. Under the assumptions in \eqref{eq:input}, the mapping $\mathcal F : \mathcal X \to \mathcal Y$ that associates the system parameters $\Theta$ with the solution $P(\cdot)$ of the Riccati equation is well-defined, continuous, and locally Lipschitz with respect to the input functions. This follows from standard sensitivity results for RDEs, which ensure continuous dependence of the solution on the coefficients. On the other hand, DeepONets are known to be universal approximators of nonlinear operators between Banach spaces. Therefore, the regularity of the Riccati operator together with the universal approximation property of DeepONets ensures that the solution operator $\mathcal F$ can be approximated arbitrarily well on compact subsets of $\mathcal X$.

\smallskip 
\noindent Let $\Theta = (A(\cdot),B(\cdot),Q(\cdot),R(\cdot),P_T)$, and define the admissible input space
\begin{equation}\label{eq:input}
    \mathcal{X} = \left\{\Theta \;\text{ s. t. }\; 
    \begin{array}{l} 
        \begin{array}{lll} A \in C([0,T]; \mathbb{R}^{n\times n}) & B \in C([0,T]; \mathbb{R}^{n\times m}) \\ Q \in C([0,T]; \mathbb{S}^n_+) & R \in C([0,T]; \Sigma_{++}^m) & P_T \in \mathbb{S}^n_{++} \end{array}
        \\ 
        \; (A(t),B(t)) \text{ is uniformly stabilizable} 
        \\ 
        \; (A(t),Q(t)^{1/2}) \text{ is uniformly detectable}
\end{array}\right\}.
\end{equation}

Let us recall that $(A,B)$ is stabilizable and $(A,Q^{1/2})$ is detectable if every eigenvalue $\lambda\in\CC$ of $A$ with non-negative real part satisfies 
\begin{align*}
    \text{rank}\Big(\lambda I -A\,|\, B\Big) = n \quad\text{ and }\quad \text{rank}\begin{pmatrix}\lambda I - A \\ Q^{1/2}\end{pmatrix} = n.  
\end{align*}

For all $\Theta\in\mathcal X$, the Riccati equation \eqref{eq:Riccati} admits a unique solution $P\in C([0,T];\mathbb S_{++}^n)$ \cite{sontag2013mathematical,anderson2007optimal}. Accordingly, we define the output space
\begin{align}\label{eq:output}
    \mathcal Y \coloneqq \Big\{P \in C([0,T];\mathbb S_{++}^n) \;\big|\; P \text{ solves } \eqref{eq:Riccati} \text{ for some } \Theta\in\mathcal X\Big\}.
\end{align}
This choice of spaces allows us to view the Riccati map as an operator 
\begin{align} \label{eq4}
    \mathcal F : \mathcal X  \mapsto \mathcal Y, \quad \mathcal F(\Theta)=P(\cdot).
\end{align}
which is continuous and locally Lipschitz with respect to the system parameters.

\subsection{Error propagation and stability analysis of the learned controller}

Consider system \eqref{eq:state} with quadratic cost \eqref{eq:functional}. For $\Theta=(A,B,Q,R,P_T)\in\mathcal X$, with $\mathcal X$ as in \eqref{eq:input}, let $P^\ast\in C([0,T];\mathbb S_{++}^n)$ be the unique solution of the DRE \eqref{eq:Riccati}, and denote $\NO P$ be an approximate solution of \eqref{eq:Riccati} obtained by DeepONet.  Define:
\begin{subequations}
	\begin{align}
        & K^*(t) = R^{-1}(t)B^\top(t)P^*(t), & & \NO K(t) = R^{-1}(t)B^\top(t)\NO P(t) & & \text{optimal feedback gain} \label{eq:feedback}
        \\
        & x^*(t), & & \NO x(t) & & \text{optimal state} \label{eq:optimal_state}
        \\
        & u^*(t) = -K^*(t)x^*(t), & & \NO u(t) = -\NO K(t)\NO x(t) & & \text{optimal control} \label{eq:control}    
    \end{align}
\end{subequations}

Denoting by $\mathcal{F}$ the exact Riccati operator and by $\NO{\mathcal{F}}$ its approximation, the learned control law is obtained by composing this operator with the feedback mapping. As a consequence, an approximation error at the operator level propagates through the following chain:
\begin{align*}
    \NO{\mathcal{F}} \sim \mathcal{F} \;\longrightarrow\; \NO K \sim K^\ast \;\longrightarrow\; \NO x \sim x^\ast \;\longrightarrow\; J(\NO u) \sim J(u^\ast).    
\end{align*}

The following result provides a rigorous quantification of this propagation mechanism, including explicit bounds on the trajectory deviation, cost suboptimality, and stability of the resulting closed-loop system.

\begin{theorem}\label{thm:error} 
Given $T>0$ and $\Theta=(A,B,Q,R,P_T)\in\mathcal X$, with $\mathcal X$ as in \eqref{eq:input}, let $P^\ast\in C([0,T];\mathbb S_{++}^n)$ be the unique solution of the DRE \eqref{eq:Riccati}. Let $\NO P$ be an approximate solution of \eqref{eq:Riccati} obtained by DeepONet, and define the error $E(t)\coloneqq \NO P(t) - P^\ast(t)$. Assume there exists $\varepsilon>0$ such that 
\begin{align}\label{eq:error_small}
        \sup_{t\in[0,T]} \|E(t)\| < \varepsilon.  
\end{align}
Then, for any initial datum $x_0\in\RR^n$, we have the following
\begin{itemize}
    \item[1.] \textbf{Finite}-\textbf{horizon performance error.} There exist constants $C_1, C_2, C_3, C_4 > 0$, depending on $\Theta$ and $T$, such that the optimal states and controls given by \eqref{eq:optimal_state} and \eqref{eq:control} satisfy
    \begin{align}
        & \sup_{t\in[0,T]} \|\NO x(t) - x^\ast(t)\|_2 \leq C_1 \|x_0\|_2 \Big(e^{C_2\varepsilon T}-1\Big), \label{eq:error_est}
        \\
        & J(\NO u) - J(u^\ast) \leq C_3\|x_0\|^2_2\varepsilon^2 Te^{C_4\varepsilon T}. \label{eq:J_est}
    \end{align}

    \item[2.] \textbf{Stability of the learned closed}-\textbf{loop system.} There exists $\varepsilon^\ast=\varepsilon^\ast(\Theta,T)>0$ such that if $\varepsilon<\varepsilon^\ast$ the approximate optimal state $\NO x(t)$ is uniformly stable on $[0,T]$. More precisely, there exist constants $M(\Theta,T),\mu(\Theta,T) > 0$ such that
    \begin{align*}
        \|\NO x(t)\|_2 \leq M e^{-\mu t}\|x_0\|_2, \quad \text{for all } t \in [0,T].    
    \end{align*}
    \end{itemize}
\end{theorem}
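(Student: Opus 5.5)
We will work throughout with the two closed-loop generators
\begin{align*}
    A^\ast_{cl}(t) = A(t) - B(t)K^\ast(t), \qquad \NO A_{cl}(t) = A(t) - B(t)\NO K(t) = A^\ast_{cl}(t) - S(t)E(t),
\end{align*}
where $S(t) \coloneqq B(t)R^{-1}(t)B(t)^\top$. Continuity of the data on the compact interval $[0,T]$ and $R\in\Sigma^m_{++}$ give uniform bounds $\|A\|,\|B\|\le \beta$ and $\|R^{-1}\|\le \rho^{-1}$. Hence $\sigma\coloneqq\sup_t\|S(t)\|<\infty$ and $\|\NO K - K^\ast\|\le \rho^{-1}\beta\,\varepsilon$ uniformly in $t$. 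We also set $\kappa\coloneqq\sup_t\|A^\ast_{cl}(t)\|$ and $p^\ast\coloneqq \sup_t\|P^\ast(t)\|$. All constants below depend only on $\Theta$ and $T$.

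\textbf{Step 1 (trajectory error).} Let $e = \NO x - x^\ast$. It satisfies
\begin{align*}
    \dot e = \NO A_{cl}e - SE\,x^\ast, \qquad e(0)=0 .
\end{align*}
Grönwall gives $\|x^\ast(t)\|\le e^{\kappa t}\|x_0\|$. Applying Grönwall to $e$ then gives
\begin{align*}
    \|e(t)\| \le \int_0^t e^{(\kappa+\sigma\varepsilon)(t-s)}\,\sigma\varepsilon\, e^{\kappa s}\|x_0\|\,ds = e^{\kappa t}\big(e^{\sigma\varepsilon t}-1\big)\|x_0\|.
\end{align*}
Taking $C_1=e^{\kappa T}$ and $C_2=\sigma$ yields \eqref{eq:error_est}.

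\textbf{Step 2 (cost gap).} The natural route is the completion-of-squares identity. For any admissible $u$, differentiate $x^\top P^\ast x$ along \eqref{eq:state} and use the DRE \eqref{eq:Riccati}. This gives
\begin{align*}
    J(u) = x_0^\top P^\ast(0)x_0 + \int_0^T (u+K^\ast x)^\top R\,(u+K^\ast x)\,dt .
\end{align*}
For $u=\NO u$ we have $u + K^\ast x = -(\NO K - K^\ast)\NO x = -R^{-1}B^\top E\,\NO x$. Therefore
\begin{align*}
    J(\NO u)-J(u^\ast) = \int_0^T \NO x^\top E S E\,\NO x\,dt \le \sigma\varepsilon^2 \int_0^T\|\NO x\|^2dt .
\end{align*}
The crude bound $\|\NO x(t)\|\le e^{(\kappa+\sigma\varepsilon)t}\|x_0\|$ then gives
\begin{align*}
    J(\NO u)-J(u^\ast)\le \sigma\, e^{2\kappa T}\,\varepsilon^2\, T\, e^{2\sigma\varepsilon T}\|x_0\|^2 .
\end{align*}
This is \eqref{eq:J_est} with $C_3=\sigma e^{2\kappa T}$ and $C_4=2\sigma$. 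The identity is the key point: it makes the gap exactly quadratic in $E$, so no first-order term appears.

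\textbf{Step 3 (stability).} We use the Lyapunov function $V=x^\top P^\ast x$. Since $P^\ast$ is continuous and positive definite on the compact $[0,T]$, there are $0<\alpha_1\le\alpha_2$ with $\alpha_1 I\preceq P^\ast(t)\preceq \alpha_2 I$. The DRE gives
\begin{align*}
    \frac{d}{dt}\big(x^\top P^\ast x\big) = -x^\top\big(Q+P^\ast S P^\ast\big)x
\end{align*}
along the optimal closed loop. Along the learned closed loop, $\NO A_{cl}$ adds the extra term $-2x^\top P^\ast S E x$. This term is bounded by $2p^\ast\sigma\varepsilon\|x\|^2$.

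This is the step I expect to be the main obstacle, because $Q$ is only semidefinite and $Q+P^\ast SP^\ast$ need not be uniformly positive definite. Detectability is what rescues the argument. On a finite horizon there is a cleaner route, which I would try first. Let $\Phi^\ast$ be the transition matrix of $A^\ast_{cl}$, with $\|\Phi^\ast(t,s)\|\le M_0e^{-\mu_0(t-s)}$. On $[0,T]$ such $M_0,\mu_0$ always exist for any $\mu_0>0$: take $M_0=e^{(\kappa+\mu_0)T}$. Treat $-SE$ as a perturbation via the variation-of-constants formula and Grönwall. This gives
\begin{align*}
    \|\NO x(t)\|\le M_0e^{-(\mu_0-M_0\sigma\varepsilon)t}\|x_0\| .
\end{align*}
Choosing $\varepsilon^\ast=\mu_0/(2M_0\sigma)$ gives the claim with $M=M_0$ and $\mu=\mu_0/2$.

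If instead one wants constants tied to genuine decay, I would use $V$ together with the uniform detectability of $(A,Q^{1/2})$ and the uniform stabilizability in \eqref{eq:input}. These give an observability-type integral inequality
\begin{align*}
    \int_t^{t+\delta}x^\top(Q+P^\ast SP^\ast)x \ge \gamma\|x(t)\|^2 .
\end{align*}
Over windows of length $\delta$ this yields a strict decrease $V(t+\delta)\le(1-\gamma/\alpha_2+c\varepsilon)V(t)$, which implies exponential decay once $\varepsilon$ is small.
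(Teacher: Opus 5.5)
Your argument is correct. Steps 2 and 3 are essentially the paper's. The paper uses the same completion-of-squares identity, written with $E^\top S E$ because $\NO P$ need not be symmetric; your bound is unaffected. It also uses the same variation-of-constants and Gr\"onwall perturbation of the nominal transition matrix, with $\varepsilon^\ast=\mu_0/(2MC_0)$.

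The genuine differences are in Step 1 and in how Step 3 obtains the nominal decay bound.

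\textbf{Step 1.} You write the error equation as $\dot e=\NO A_{cl}e-SEx^\ast$ and apply plain Gr\"onwall. The paper instead writes $\dot e=A^\ast_{cl}e-SE\NO x$ and works with $V_e=e^\top P^\ast e$. The DRE cancels the nominal terms and leaves $\dot V_e=-e^\top(Q+P^\ast SP^\ast)e-2e^\top P^\ast SE\NO x$. After bounding $\|\NO x\|_2\le M_0\|x_0\|_2+\|e\|_2$, a Gr\"onwall argument on $\sqrt{V_e}$ gives $C_1=M_0$ and $C_2=c_1/(2\gamma_{\min})$, with $c_1$ built from $\|B\|_F^2\|R^{-1}\|_F\|P^\ast\|_F$.

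Your route has three advantages:
\begin{itemize}
\item it is more elementary;
\item it needs no lower bound on $P^\ast$;
\item it avoids dividing by $\sqrt{V_e}$ at $t=0$, where $V_e=0$.
\end{itemize}
Putting $\NO A_{cl}$ in the homogeneous part and $x^\ast$ in the forcing also removes the bootstrapping through $\|\NO x\|_2$. The paper's route instead exploits the dissipativity of the optimal loop. It thereby trades your factor $e^{\kappa T}$ for the conditioning of $P^\ast$: along the optimal loop $x^{\ast\top}P^\ast x^\ast$ is nonincreasing, so $M_0=\sqrt{\gamma_{\max}/\gamma_{\min}}$ is available.

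\textbf{Step 3.} The paper asserts, without proof, that the nominal loop is uniformly exponentially stable by the stabilizability and detectability assumptions. Your remark that $\|\Phi^\ast(t,s)\|\le e^{(\kappa+\mu_0)T}e^{-\mu_0(t-s)}$ holds on $[0,T]$ for every $\mu_0>0$ makes that step self-contained. It also shows that part 2 has little content when $M$ and $\mu$ may depend on $T$. Your crude bound $\|\NO x(t)\|_2\le e^{(\kappa+\sigma\varepsilon)t}\|x_0\|_2$ already gives the conclusion for every $\varepsilon$ below any fixed threshold $\varepsilon^\ast$, with $M=e^{(\kappa+\sigma\varepsilon^\ast+\mu)T}$.

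Your closing alternative aimed at genuine decay is not needed, and as stated it fails. The inequality $\int_t^{t+\delta}x^\top(Q+P^\ast SP^\ast)x\,ds\ge\gamma\|x(t)\|^2$ is a uniform observability property of the closed loop, not a consequence of detectability. For example, $B\equiv 0$ and $Q\equiv 0$ with $A$ constant and Hurwitz lies in $\mathcal X$, yet the integrand vanishes identically.
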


\begin{proof}[Proof sketch]

We provide here the main ideas for the proof of Theorem \ref{thm:error}. Complete details can be found in Appendix \ref{app:proof}.

The argument is based on the fact that the feedback gain depends linearly on the Riccati matrix. Hence the approximation error on the learned Riccati operator propagates directly to the closed-loop dynamics. First, from the definition \eqref{eq:feedback} of the feedback gains, we can estimate  
\begin{align*}
    \|\NO K(t)-K^\ast(t)\|_F \leq C\|E(t)\|_F.
\end{align*}
So controlling the Riccati error uniformly in time also controls the feedback perturbation. 

\medskip
\noindent For the trajectory estimate, let
\begin{align*}
    e(t)\coloneqq\NO x(t)-x^\ast(t), \quad A^\ast_{cl}(t)\coloneqq A(t)-B(t)K^\ast(t).
\end{align*}

Since $x^\ast$ solves the optimal closed-loop system and $\NO x$ solves the perturbed one, subtracting the two equations yields an error dynamics of the form
\begin{align*}
    \dot e(t)=A^\ast_{cl}(t)e(t)-B(t)R^{-1}(t)B^\top(t)E(t)\NO x(t), \quad e(0)=0.    
\end{align*}

Next, introduce the Lyapunov functional $V_e(t)\coloneqq e(t)^\top P^\ast(t)e(t)$. Because $P^\ast(t)\in \mathbb S_{++}^n$ uniformly on $[0,T]$, $V_e(t)$ is equivalent to $\|e(t)\|_2^2$. Differentiating $V_e(t)$ along the error dynamics, and using the Riccati equation satisfied by $P^\ast(t)$, the nominal terms cancel while the remaining terms are proportional to the perturbation $E(t)$. This gives a differential inequality of the form
\begin{align*}
    \dot V_e(t)\leq \alpha \|E(t)\|_F \sqrt{V_e(t)} + \beta\|E(t)\|_F V_e(t),    
\end{align*}
for suitable constants $\alpha,\beta>0$. An application of Gr\"onwall's lemma, together with $e(0)=0$, then yields \eqref{eq:error_est}.

\medskip 
For the cost estimate, we write $J(\NO u)-J(u^\ast)$ and expand the quadratic terms around the optimal pair $(x^\ast,u^\ast)$. Since the LQR cost is quadratic and $u^\ast$ is optimal, the first-order terms cancel, while the remaining terms are controlled by
\begin{align*}
   \|\NO x-x^\ast\|_{L^\infty(0,T)}^2 \quad\text{ and }\quad \|\NO u-u^\ast\|_{L^\infty(0,T)}^2.
\end{align*}
Using
\begin{align*}
    \NO u-u^\ast = -(\NO K-K^\ast)\NO x-K^\ast(\NO x-x^\ast),    
\end{align*}
together with the previous gain and trajectory bounds and a Lyapunov analysis, one obtains \eqref{eq:J_est}.

\medskip 
\noindent Finally, for the stability part, the nominal optimal closed-loop system 
\begin{align*}
    \dot x^\ast(t)=\big(A(t)-B(t)K^\ast(t)\big)x^\ast(t)    
\end{align*}
is uniformly exponentially stable under the standing stabilizability and detectability assumptions used to define the admissible class $\mathcal X$. Since
\begin{align*}
    \NO A_{cl}(t)-A^\ast_{cl}(t) =-B(t)\big(\NO K(t)-K^\ast(t)\big),    
\end{align*}
the learned closed-loop matrix is a small perturbation of the nominal exponentially stable one whenever the DeepONet approximation error is sufficiently small. Standard robustness of exponential stability for linear time-varying systems then yields the existence of $\varepsilon^\ast>0$ such that, if \eqref{eq:error_small} holds with $\varepsilon<\varepsilon^\ast$, the learned closed-loop system remains uniformly exponentially stable.
\end{proof}

Theorem \ref{thm:error} shows that the learned controller inherits the key qualitative properties of the optimal solution, provided the approximation error of the Riccati operator is sufficiently small. In particular, the result establishes robustness of exponential stability with respect to operator perturbations, together with quantitative bounds on the induced performance degradation. This goes beyond standard approximation guarantees by providing a control-theoretic interpretation of the learned operator, ensuring that its use within a feedback loop remains reliable.

\section{Numerical experiments}\label{sec:experiments}

In this section, we present numerical simulations to demonstrate the effectiveness of DeepONets in solving LQR problems. Following our discussion in Section \ref{sec:deeponet}, both time-independent and time-dependent parameters will be considered.

All simulations in this paper were conducted on a laptop running Windows 11 Home (version 24H2, OS build 26100.7840), equipped with an Intel\textregistered\ Core\texttrademark\ Ultra 7 255HX CPU (20 cores and 20 threads) and 32 GB of RAM. The system model is an HP OMEN MAX Gaming Laptop 16-ah0xxx, which includes an NVIDIA GeForce RTX 5060 Laptop GPU with 4 GB of dedicated memory. The implementation was developed in Python 3.13.9 using PyTorch 2.8.0+cu129.

\subsection{Time-independent parameters}

We shall focus here on the case of time-independent matrices $(A,B,Q,R)$, for which the DRE \eqref{eq:Riccati} reduces to the ARE \eqref{eq:RiccatiAlgebraic}. Our objective is to learn the solution operator $\mathcal F: (A,B,Q,R) \mapsto P$. 

\subsubsection{3-dimensional case}

In a first round of experiments, we have focused on a simple 3-dimensional case. As we commented before, to learn properly $\mathcal F$ and achieve good generalization performances, the quality and coverage of the training dataset are crucial. 

The LQR weighting matrices $Q$ and $R$ are chosen diagonal to penalize states and control inputs independently. For what concerns the dynamics matrices $A$ and $B$, instead, to construct representative training samples we exploit the Brunovsky canonical form \eqref{eq:brunovsky1}-\eqref{eq:brunovsky2}.

\medskip
\noindent In dimension $n=3$, this reduces to the following three cases.

\medskip
\noindent\textbf{Case1. Single}-\textbf{input controllable form:}
\begin{equation*}
    A = \begin{bmatrix} 0 & 1 & 0 \\ 0 & 0 & 1 \\ -a_0 & -a_1 & -a_2 \end{bmatrix}, \quad 
    B = \begin{bmatrix} 0 \\ 0 \\ 1 \end{bmatrix}, \quad
    Q = \begin{bmatrix} q_1 & 0 & 0 \\ 0 & q_2 & 0 \\ 0 & 0 & q_3 \end{bmatrix}, \quad
    R = \begin{bmatrix} r_1  \end{bmatrix}.
\end{equation*}

\medskip
\noindent\textbf{Case 2. Decoupled mixed}-\textbf{order form:}
\begin{equation*}
    A = \begin{bmatrix} 0 & 1 & 0 \\ -a_0 & -a_1 & 0 \\ 0 & 0 & -b_0 \end{bmatrix}, \quad 
    B = \begin{bmatrix} 0 & 0 \\ 1 & 0 \\ 0 & 1 \end{bmatrix}, \quad
    Q = \begin{bmatrix} q_1 & 0 & 0 \\ 0 & q_2 & 0 \\ 0 & 0 & q_3 \end{bmatrix}, \quad
    R = \begin{bmatrix} r_1 & 0 \\ 0 & r_2 \end{bmatrix}.
\end{equation*}

\medskip
\noindent \textbf{Case 3. Fully decoupled first}-\textbf{order form:}
\begin{equation*} 
    A = \begin{bmatrix} -a_0 & 0 & 0 \\ 0 & -b_0 & 0 \\ 0 & 0 & -c_0 \end{bmatrix}, \quad
    B = \begin{bmatrix} 1 & 0 & 0 \\ 0 & 1 & 0 \\ 0 & 0 & 1 \end{bmatrix}, \quad 
    Q = \begin{bmatrix} q_1 & 0 & 0 \\ 0 & q_2 & 0 \\ 0 & 0 & q_3 \end{bmatrix}, \quad
    R = \begin{bmatrix} r_1 & 0 & 0 \\ 0 & r_2 & 0 \\ 0 & 0 & r_3 \end{bmatrix}.
\end{equation*}

To ensure a comprehensive and representative dataset, we generated sufficient random samples for each of the above structural types. In particular, for each configuration, system parameters are randomly sampled from uniform distributions:
\begin{align*}
    a_i \sim \mathcal{U}(-2,2), \quad q_j, r_j \sim \mathcal{U}(0.1,1.1).    
\end{align*}
Moreover, the coefficients $a_i$ are sampled to cover the following spectral characterization of $A$:
\begin{itemize}
    \item[1.] Fully stable: all eigenvalues have negative real parts.
    \item[2.] Fully unstable: all eigenvalues have positive real parts.
    \item[3.] Partially stable: some eigenvalues have negative real parts and others positive.
\end{itemize}

A total of 15000 samples are generated per trial, with an 80\%–20\% train-test split. The ground-truth solutions of the ARE are computed using a standard CARE solver. The DeepONet model we employed consists of 
\begin{itemize}
    \item a branch network with $5$ layers of hidden dimensions $36$, $512$, $256$, $128$, and $256$, respectively;
    \item a trunk network with $4$ layers of hidden dimensions $2$, $128$, $256$, and $256$, respectively. 
\end{itemize}
Training has been performed for $1500$ epochs using the Adam optimizer. 

\medskip
\noindent\textbf{Performance evaluation.} We assess the quality of the learned operator through two complementary metrics, reflecting both control performance and approximation accuracy:
\begin{itemize}
    \item[1.] \textbf{closed}-\textbf{loop stability}, evaluating whether the feedback law induced by the learned operator preserves stability of the controlled system, which is the key property from a control-theoretic perspective;
    \item[2.] \textbf{test loss}, measuring the discrepancy between the predicted and true Riccati solutions, and providing a quantitative indicator of operator approximation accuracy.
\end{itemize}

This dual evaluation allows us to simultaneously quantify how well the operator is approximated and how reliably the resulting controller behaves when deployed in closed loop.

Table \ref{table1} summarizes the prediction performance over $10$ independent trials. The results demonstrate a high level of robustness of the learned controller: in $7$ out of $10$ trials, all test samples are successfully stabilized, while in the reaming $3$ trials only a single unstable case is observed. This corresponds to a stability rate exceeding 99.99\%, indicating that the learned operator preserves the qualitative behavior of the optimal feedback law across a wide range of systems. The test loss remains consistently low, confirming that this robustness is achieved together with accurate operator approximation.

\begin{table}[!h]
    \centering
    \begin{tabular}{cccccc}
        \toprule
        \textbf{Trial No.} & \shortstack{\textbf{Stable} \\ \textbf{samples}} & \shortstack{\textbf{Unstable} \\ \textbf{samples}} & \shortstack{\textbf{Stable} \\ \textbf{eigenvalues}} & \shortstack{\textbf{Unstable} \\ \textbf{eigenvalues}} & \textbf{Test loss} \\
        \midrule
        $1$  & $3000$  & $0$  & $9000$  & $0$ & $4.93\times 10^{-3}$ \\
        $2$  & $3000$  & $0$  & $9000$  & $0$ & $3.62\times 10^{-3}$ \\
        $3$  & $2999$  & $1$  & $8999$  & $1$ & $6.21\times 10^{-3}$ \\
        $4$  & $3000$  & $0$  & $9000$  & $0$ & $4.95\times 10^{-3}$ \\
        $5$  & $3000$  & $0$  & $9000$  & $0$ & $4.74\times 10^{-3}$ \\
        $6$  & $3000$  & $0$  & $9000$  & $0$ & $5.70\times 10^{-3}$ \\
        $7$  & $2999$  & $1$  & $8999$  & $1$ & $5.11\times 10^{-3}$ \\
        $8$  & $3000$  & $0$  & $9000$  & $0$ & $4.37\times 10^{-3}$ \\
        $9$  & $2999$  & $1$  & $8999$  & $1$ & $5.60\times 10^{-3}$ \\
        $10$ & $3000$  & $0$  & $9000$  & $0$ & $9.50\times 10^{-3}$ \\
        \midrule
        Mean & & & & & $5.47\times 10^{-3}$ \\
        Std  & & & & & $1.58\times 10^{-3}$ \\
        \bottomrule
    \end{tabular}
    \caption{DeepONet prediction results over 10 independent trials in dimension $n=3$ (test set: 3000 samples per trial).}
    \label{table1}
\end{table}

These results show that the learned operator yields a feedback law that is robust to approximation errors, preserving closed-loop stability in virtually all cases while maintaining good quantitative accuracy.

To further investigate the failure mechanism, we focus on Trial~3, which contains the only unstable sample. Figure~\ref{Eigenvalue1} presents the eigenvalue distributions of the true and predicted closed-loop systems for this trial. 
\begin{figure}[!h]
    \begin{minipage}{0.45\textwidth}
        \centering
        \includegraphics[width=\textwidth]{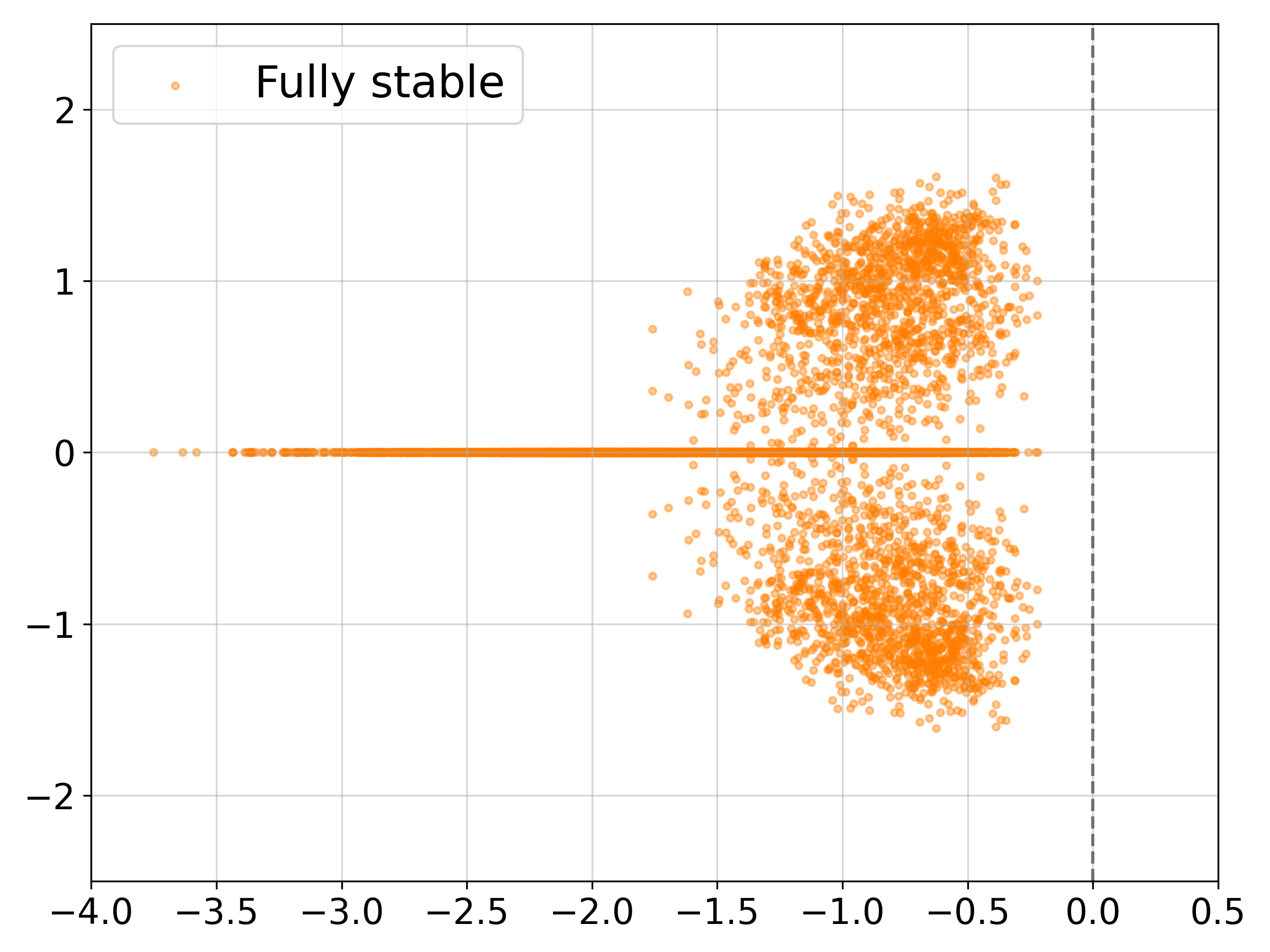}
    \end{minipage}
    \hfill
    \begin{minipage}{0.45\textwidth}
        \centering
        \includegraphics[width=\textwidth]{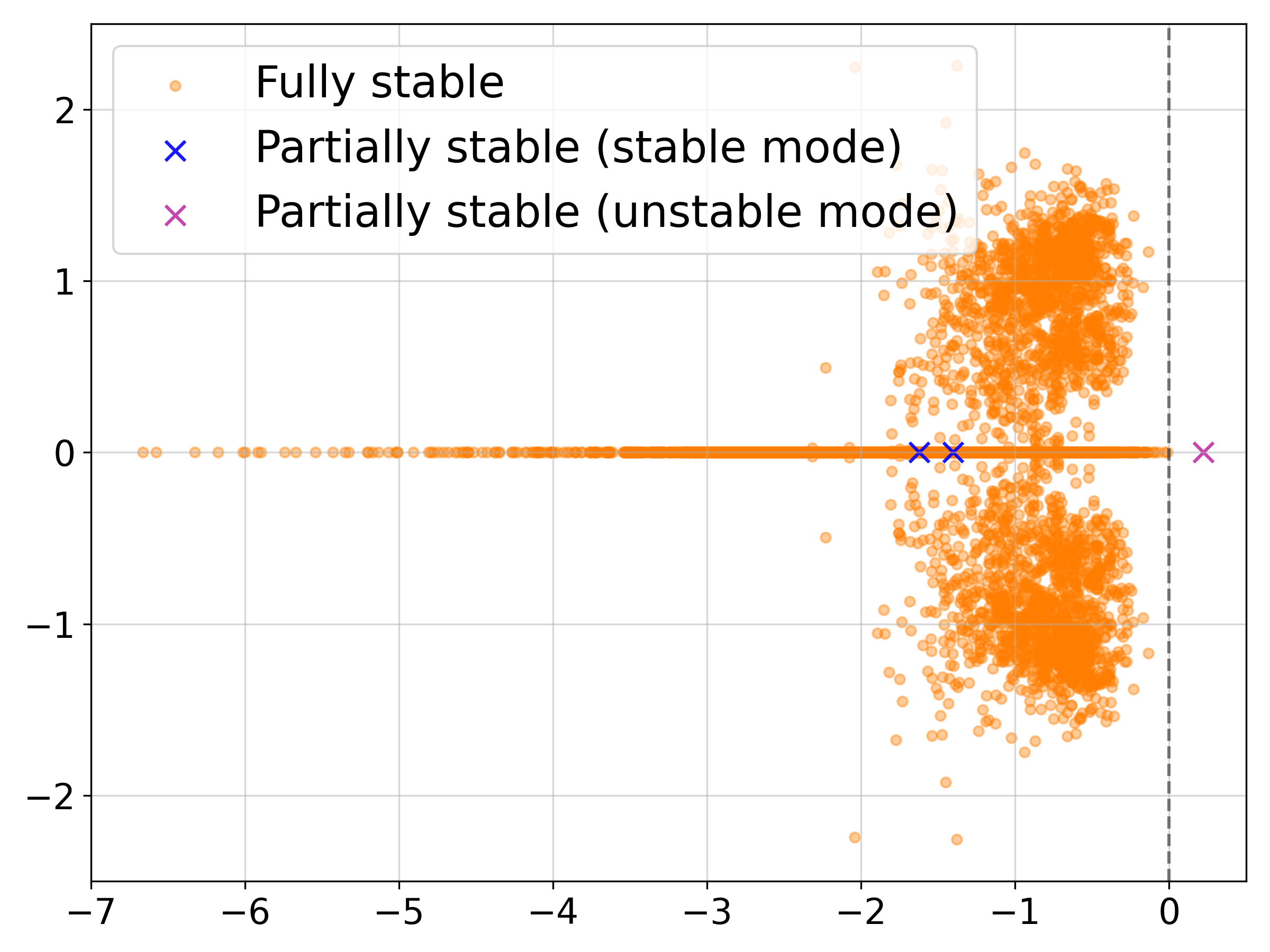}
    \end{minipage}
    \caption{Eigenvalue distribution for 3-d LQR problems. On the left we have the true closed-loop matrix $A^{\text{cl}}$ and on the right the DeepONet approximated one $\NO{A}^{\text{cl}}$.}\label{Eigenvalue1}
\end{figure}

Figure~\ref{Eigenvalue1} show the eigenvalue distributions of $A^{\text{cl}}$ and $\NO{A}^{\text{cl}}$, respectively. Under the true control law, all eigenvalues lie strictly in the left half-plane, confirming closed-loop stability. For the predicted control law, the eigenvalues closely match the true distribution and remain in the left half-plane for almost all samples. However, one eigenvalue is observed to shift slightly into the right half-plane, resulting in the only unstable case.

A closer examination reveals that, for this sample, the approximation error $\|E\|$ exceeds the threshold $\varepsilon^\ast$ identified in Theorem \ref{thm:error}. This leads to the observed deviation in the eigenvalue location and explains the loss of stability. Importantly, this behavior provides an empirical validation of the theoretical prediction: instability occurs precisely when the conditions ensuring stability preservation are violated. In this sense, the failure case is not anomalous but rather confirms the sharpness and relevance of the theoretical bound.

Overall, the stable-sample prediction accuracy exceeds 99.99\%, while the rare failure case is fully explained by the theoretical error threshold. This confirms that the learned operator behaves in accordance with the control-theoretic guarantees established in Section \ref{sec:math}.

\subsubsection{Progressive DeepONet architecture: 4-dimensional case}

As mentioned in Section \ref{subsec:progressive}, a central challenge in OL is scalability with respect to the system dimension. To address this issue, we have proposed a Progressive OL strategy allowing to transfer knowledge learned in low-dimensional settings to higher-dimensional problems.

In this section, we assess the performance of the progressive DeepONet by transferring our model trained in dimension $n=3$ to the case $n=4$. 

Following the same data generation procedure as in the 3-d case, we consider five Brunovsky types (\([4]\), \([3,1]\), \([2,2]\), \([2,1,1]\), and \([1,1,1,1]\)) and generate samples covering stable, unstable, and mixed dynamics. A total of $19000$ training samples and $1000$ test samples are used. We compared two models under identical settings: 
\begin{itemize}
    \item[1.] the proposed \textbf{progressive DeepONet}, which only trains the embedding and extension layers; 
    \item[2.] a \textbf{standard 4}-\textbf{d DeepONet} trained from scratch with architecture \(64 \rightarrow 512 \rightarrow 256 \rightarrow 128 \rightarrow 256\) (branch) and \(2 \rightarrow 128 \rightarrow 256 \rightarrow 256\) (trunk).
\end{itemize} 
Both models are trained for $3000$ epochs using Adam with learning rate $10^{-4}$.

As before, we evaluated the performances using closed-loop stability and prediction accuracy. The standard 4-d DeepONet achieves a stable-sample accuracy of $97.80\%$, while the progressive DeepONet achieves $97.50\%$. At the eigenvalue level, the accuracy is $99.42\%$ and $99.38\%$, respectively. 

The marginal performance gap indicates that a substantial portion of the complexity of the Riccati operator can be captured in a low-dimensional representation learned in the $3$-d setting. In this sense, the progressive strategy provides empirical evidence that the operator admits a structured representation in which its essential features can be encoded in a reduced latent space and transferred across dimensions. From a control perspective, these results further confirm that the learned operator retains robustness properties when transferred across dimensions. Despite the dimensional increase and the reduced number of trainable parameters, the progressive architecture preserves closed-loop stability with high reliability, indicating that the essential stability-inducing structure of the Riccati operator is captured in the learned representation.

At the same time, Table 2 shows that the progressive DeepONet significantly reduces computational cost, with a reduction of approximately 95\% in the number of trainable parameters and about 77\% in training time. These gains, obtained with only a marginal loss in stability accuracy, further support the interpretation that the Riccati operator may exhibit an intrinsic low-dimensional structure in the considered regimes, that can be effectively exploited through a latent representation.

\begin{table}[h!]
    \centering    
    \begin{tabular}{lccc}
        \hline
        \textbf{Aspect} & \textbf{4-d DeepONet} & \textbf{Progressive DeepONet} & \textbf{Reduction} 
        \\
        \hline
        Parameter count & $494.8$k & $22.4$k & $95.47\%$  
        \\
        Training time & $271.81$s & $61.81$s & $77.25\%$ 
        \\
        Stable sample accuracy & $97.80\%$ & $97.50\%$ & 
        \\
        Stable eigenvalue accuracy & $99.42\%$ & $99.38\%$ & 
        \\
        Test loss & $4.44\times 10^{-2}$ & $8.74\times 10^{-2}$ & 
        \\
        \hline
    \end{tabular}
    \caption{Performance and computational resource comparison for $4$-dimensional systems.}\label{table3}
\end{table}

These results show that the progressive architecture achieves accuracy comparable to a standard DeepONet trained from scratch, while using significantly fewer trainable parameters and requiring substantially less training time. More precisely, the progressive model retains a high level of closed-loop stability and prediction accuracy, with only a marginal degradation compared to the baseline model. At the same time, it reduces the number of trainable parameters by an order of magnitude and accelerates convergence, demonstrating a clear advantage in terms of computational efficiency.

Overall, the proposed progressive DeepONet provides an efficient and scalable framework for approximating high-dimensional Riccati solution operators. Beyond computational efficiency, the results suggest that the operator may exhibit structural regularities across dimensions that can be captured through a reduced latent representation, even though a rigorous theoretical justification of this phenomenon remains an open question.

\subsubsection{10-dimensional case}

To further evaluate the scalability of the proposed OL framework, we extend our experiments to 10-d systems. 

Let us stress that, when $n=10$, there are $42$ possible Brunovsky structural configurations of the matrix $A$. Generating and training a dataset covering all such cases is computationally prohibitive and unnecessary for our purposes. Instead, we chose to focus on the following particular Brunovsky canonical form
\begin{equation*}
    A = \begin{bmatrix}
        0 & 1 & 0 & \cdots & 0 \\
        0 & 0 & 1 & \cdots & 0 \\
        \vdots & \vdots & \vdots & \ddots & \vdots \\
        0 & 0 & 0 & \cdots & 1 \\
        -a_0 & -a_1 & -a_2 & \cdots & -a_{9}
    \end{bmatrix}, 
    \quad B = \begin{bmatrix} 0 \\ 0 \\ \vdots \\ 0 \\ 1\end{bmatrix}, 
    \quad Q=\begin{bmatrix} q_1 & 0 & 0 & \cdots & 0 \\ 0 & q_2 & 0 & \cdots & 0 \\ \vdots & & 0 & \cdots & \vdots \\ 0 & 0 & \cdots & q_9 & 0 \\ 0 & 0 & 0 & \cdots & q_{10}\end{bmatrix}, 
    \quad R=[r_1], 
\end{equation*}
with $q_i\geq 0$ and $r_1>0$. The coefficients $a_i$ are once again randomly sampled to generate systems with stable, unstable, and mixed dynamics.

Due to the increased computational cost in high dimensions, we construct a moderately sized dataset consisting of 6000 training samples (3000 stable and 3000 unstable) and 200 test samples (100 stable and 100 unstable). A compact DeepONet architecture is adopted, with branch network $(211\to256\to64$ and trunk network $2\to128\to64$, trained for 1000 epochs using Adam with learning rate $10^{-4}$.

We evaluate performance using the same stability-based criteria as in the 3-d case. Figure~\ref{Eigenvalue2} presents the eigenvalue distributions and Table~\ref{table4} summarizes the results on the 10-d test set. The model achieves a stable-sample accuracy of $98.50\%$, while the eigenvalue-level accuracy reaches $99.70\%$.

Notably, all unstable predictions correspond to systems that do not satisfy the assumptions underlying Theorem \ref{thm:error}, in particular the conditions ensuring stability of the nominal closed-loop system. This observation is consistent with the theoretical framework, as stability preservation is guaranteed only within the admissible class $\mathcal{X}$. Outside this regime, no such guarantee is expected, and the observed behavior aligns with this limitation.

\begin{figure}[!h]
    \begin{minipage}{0.45\textwidth}
        \centering
        \includegraphics[width=\textwidth]{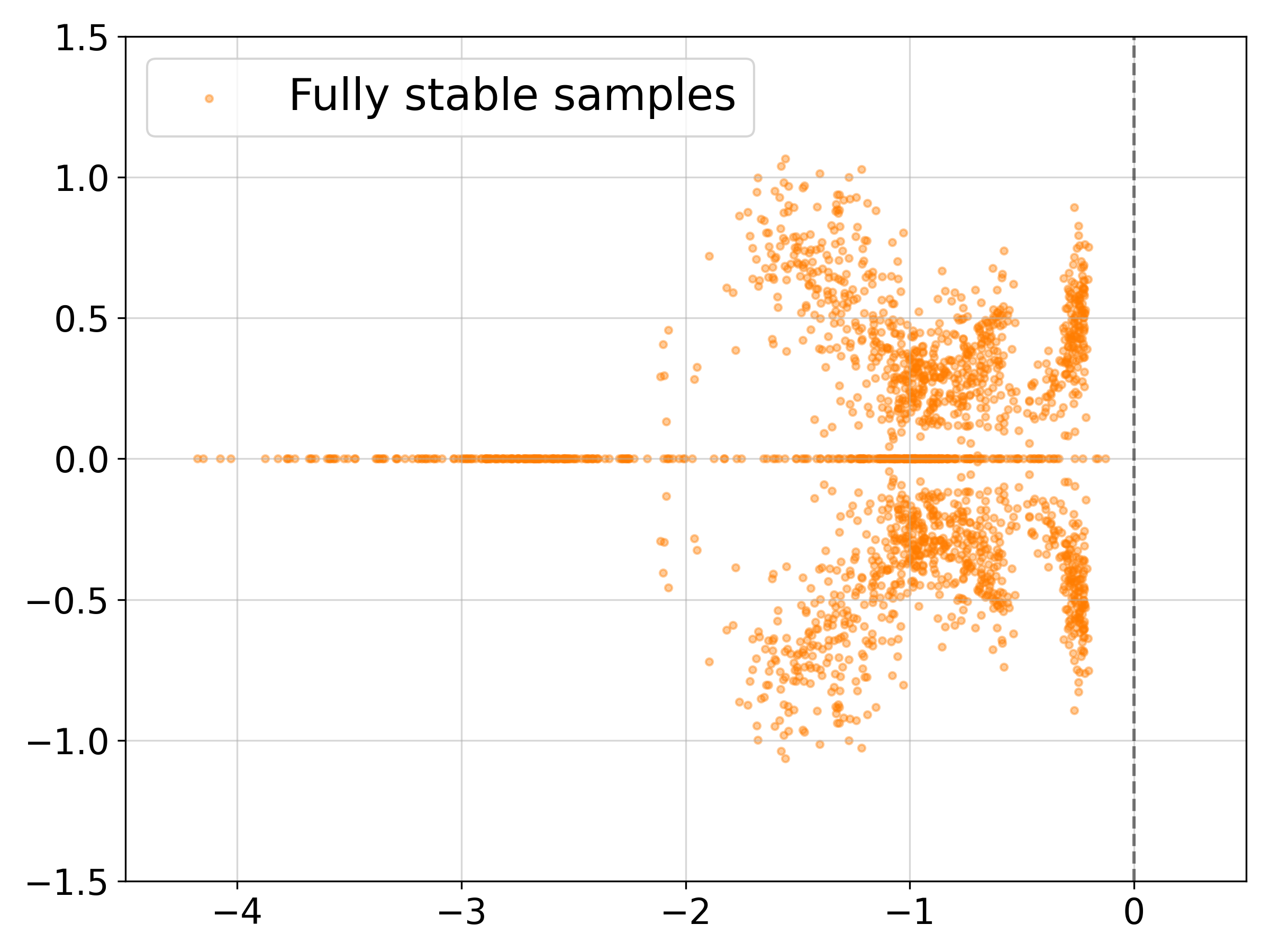}
    \end{minipage}
    \hfill
    \begin{minipage}{0.45\textwidth}
        \centering
        \includegraphics[width=\textwidth]{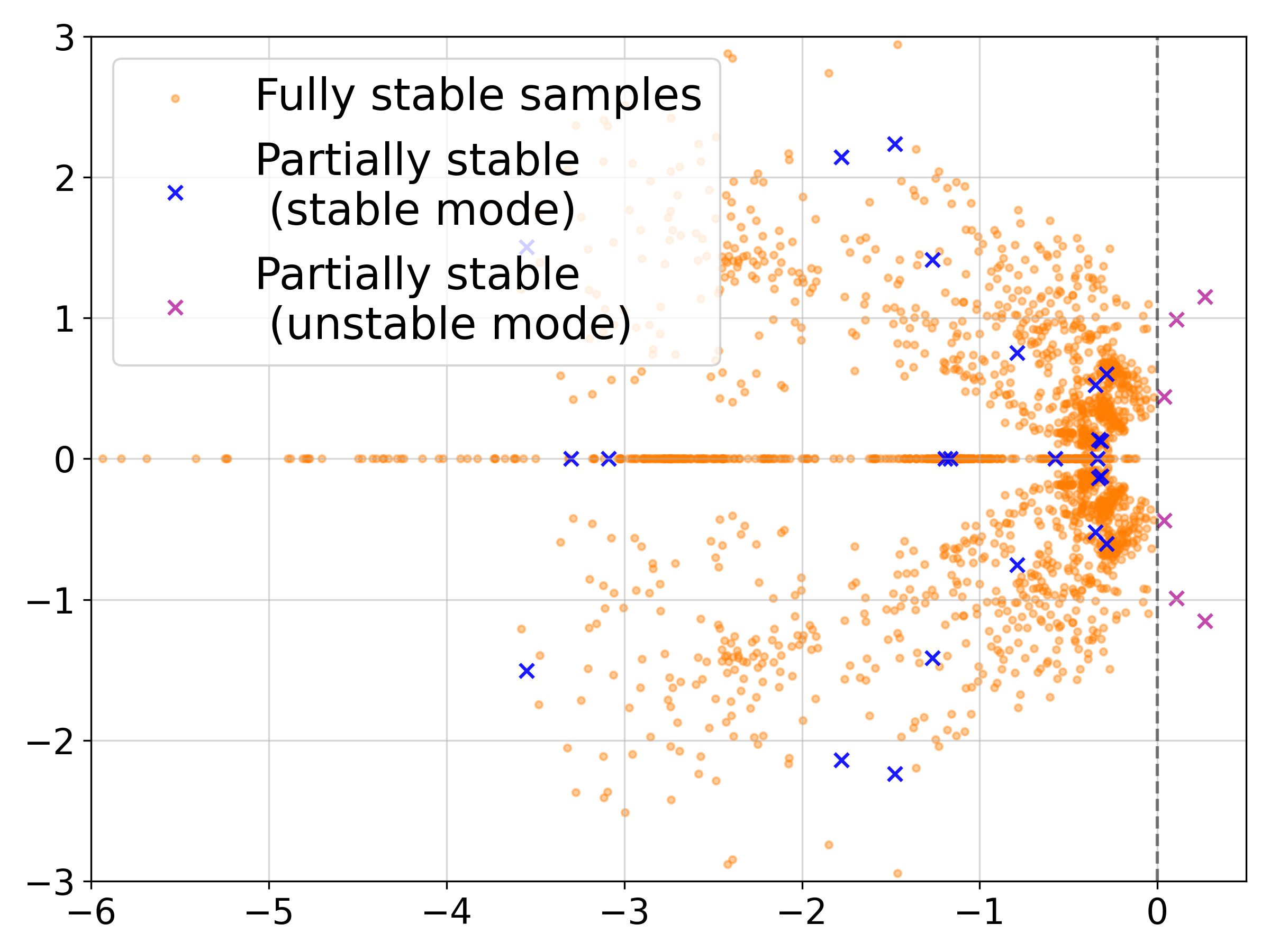}
    \end{minipage}
    \caption{Eigenvalue distribution for 10-d LQR problems. On the left we have the true closed-loop matrix $A^{\text{cl}}$ and on the right the DeepONet approximated one $\NO{A}^{\text{cl}}$.}\label{Eigenvalue2}
\end{figure}

\begin{table}[htbp]
    \centering    
    \begin{tabular}{ccccc}
        \toprule
        \shortstack{\textbf{Stable} \\ \textbf{samples}} & \shortstack{\textbf{Unstable} \\ \textbf{samples}} & \shortstack{\textbf{Stable} \\ \textbf{eigenvalues}} & \shortstack{\textbf{Unstable} \\ \textbf{eigenvalues}} & \textbf{Test loss} \\
        \midrule
        $197$ & $3$ & $1994$ & $6$ & $8.77\times10^{-3}$ \\       
        \bottomrule
    \end{tabular}
    \caption{Summary of DeepONet prediction results for 10-d (test set: 200 samples).}
    \label{table4}
\end{table}

Overall, our numerical results consistently support the theoretical framework developed in Section \ref{sec:math}. Across all tested regimes, the learned operator preserves the stability properties of the optimal feedback law with very high probability, even in the presence of non-negligible approximation errors. Moreover, the rare failure cases occur precisely when the theoretical assumptions are violated or when the approximation error exceeds the predicted threshold. This provides strong empirical evidence that the proposed approach not only achieves accurate operator approximation, but also ensures reliable control performance in a robust and theoretically predictable manner.

\subsection{Time-dependent parameters}
In this section, we evaluate the proposed DeepONet framework for learning the solution operator of time-varying DREs:
\begin{align*}
    \mathcal{F}:\big(A(\cdot), B(\cdot), Q(\cdot), R(\cdot), P_T\big) \mapsto P(\cdot),
\end{align*}
where $P(t)$ denotes the unique symmetric positive definite solution of \eqref{eq:Riccati}. As for the time-independent case before, we have performed simulations for 3- and 10-dimensional models.

\subsubsection{Data generation, DeepONet architecture and experimental settings}
For a system with state dimension $n$ and control dimension $m$, the interval $[0,1]$ is discretized into $N_t$ uniform time steps. The time-varying matrices $A(t)$, $B(t)$, and $Q(t)$ are parameterized via a finite trigonometric basis as in \eqref{eq:trigonometric}.

All coefficients $A_0, B_0, M_0, C_{ki}, D_{ki}, E_{ki}$ with $k=1,2$ and $i= 1,\dots, r_{\text{base}}$ are sampled from $\mathcal{N}(0, 0.05^2)$. Here we have set $r_{\text{base}}=2$. Moreover, to ensure that our dataset fulfills the necessary conditions for the existence of a LQR control, we applied a filtering procedure and retain only those matrices that satisfy stabilizability of $(A(t),B(t))$ and detectability of $(A(t), Q^{1/2}(t))$ at all discrete time steps. The control weighting matrix $R$, instead, is assumed to be constant: $R(t) = R = I_m$ for all $t\in[0,1]$. 

For the 3-d case, a dataset of 1000 valid trajectories is obtained with the above procedure. For the 10-d case, instead, our dataset is constituted by 3000 valid trajectories.

The terminal condition is fixed as $P_T = I_n$. Each trajectory is numerically solved via a backward fourth-order Runge-Kutta scheme (RK4). 

As for the architecture, for the 3-d case we considered a DeepONet with branch and trunk networks having inner dimensions $[256,256,128]$ and $[128,128,128]$, respectively, both with GELU activation. The model is trained for $60$ epochs using Adam with learning rate $1\times10^{-3}$. 

For the 10-d case, instead, we considered a DeepONet whose branch and trunk networks have inner dimensions $[1024,1024,512,512]$ and $[256,512,512]$, respectively, again both with GELU activation. The model is trained for $150$ epochs using Adam with learning rate $10^{-4}$. 

Finally, to quantify prediction accuracy for the closed-loop system, we define the following error metrics. 

\begin{displaymath}
    \begin{array}{ll}
        \text{Riccati solution errors}
        \\
        \displaystyle e_P = \frac 1K \sum_{k=1}^K \left(\int_0^T \| P^{(k)}(t) - \NO{P}^{(k)}(t) \|_F^2 \, dt \right)^{\frac 12} & \quad \text{absolute error} 
        \\[15pt]
        \displaystyle e_P^r = \frac 1K \sum_{k=1}^K \left( \int_0^T \| P^{(k)}(t) \|_F^2 \, dt \right)^{-\frac 12} \left( \int_0^T \| P^{(k)}(t) - \NO{P}^{(k)}(t) \|_F^2 \, dt \right)^{\frac 12} & \quad \text{relative error}
        \\[25pt]     
        \text{State errors} 
        \\
        \displaystyle e_x = \frac 1K \sum_{k=1}^K \left( \int_0^T \| x^{(k)}(t) - \NO{x}^{(k)}(t) \|_2^2 \, dt \right)^{\frac 12} & \quad \text{absolute error} 
        \\[15pt]
        \displaystyle e_x^r = \frac 1K \sum_{k=1}^K \left( \int_0^T \| x^{(k)}(t) \|_2^2 \, dt \right)^{-\frac 12}\left( \int_0^T \| x^{(k)}(t) - \NO{x}^{(k)}(t) \|_2^2 \, dt \right)^{\frac 12} & \quad \text{relative error}
        \\[25pt]
        \text{Cost functional errors} 
        \\
        \displaystyle e_J = \frac 1K \sum_{k=1}^K \left| J^{(k)} - \NO{J}^{(k)} \right| & \quad \text{absolute error} 
        \\[15pt]
        \displaystyle e_J^r = \frac 1K \sum_{k=1}^K \frac{\left| J^{(k)} - \NO{J}^{(k)} \right|}{\left| J^{(k)} \right|} & \quad \text{relative error}
    \end{array}    
\end{displaymath}

Here, $K$ denotes the number of test trajectories. For each trajectory indexed by $k=1,\dots,K$, $P^{(k)}(t)$, $x^{(k)}(t)$, and $J^{(k)}$ represent the ground-truth Riccati solution, state trajectory, and cost functional, respectively, while $\NO{P}^{(k)}(t)$, $\NO{x}^{(k)}(t)$, and $\NO{J}^{(k)}$ denote their corresponding DeepONets predictions. 

\subsubsection{Results}

To evaluate the generalization capability of the trained model, we generate $K=100$ test trajectory from a random instance not seen during training. The results in Table \ref{error_compare} show that DeepONet accurately predicts $P(t)$, $x(t)$, and $J$, with errors slightly increasing from 3-d to 10-d but remaining reasonably small, confirming good generalization across dimensions.

\begin{table}[t]
    \centering
    \begin{tabular}{c c c}
        \toprule
        Error metric & $d=3$ & $d=10$ \\
        \midrule

        Test loss & $4.58\times10^{-6}$ & $2.310\times10^{-5}$ \\

        \midrule 
        
        $e_P$ & $2.92\times10^{-2}$ & $2.72\times10^{-1}$ \\
        $e_P^r$ & $1.66\times10^{-2}$ & $8.19\times10^{-2}$ \\
        
        \midrule
        
        $e_x$  & $1.59\times10^{-4}$ & $6.76\times10^{-3}$ \\
        $e_x^r$ & $1.06\times10^{-4}$ & $2.22\times10^{-3}$ \\
        
        \midrule
        
        $e_J$ & $9.26\times10^{-6}$ & $2.67\times10^{-3}$ \\
        $e_J^r$ & $2.90\times10^{-6}$ & $2.48\times10^{-4}$ \\
        
        \bottomrule
    \end{tabular}
    \caption{Average absolute and relative errors for $P(t)$, state $x(t)$, and cost $J$ in different dimensions.}
    \label{error_compare}
\end{table}

        
        

       

Moreover, Figure \ref{fig3d} illustrates the state trajectories $x(t)$ and corresponding control inputs $u(t)$ for a representative 3-d test system under both the true Riccati and DeepONet-based controllers. The results demonstrate that the learned operator accurately reproduces the system behavior and feedback control in both settings, confirming its effectiveness and generalization capability. 

\begin{figure}[!h]
	\centering
	\begin{minipage}{0.85\textwidth}
		\centering
		\includegraphics[width=\textwidth]{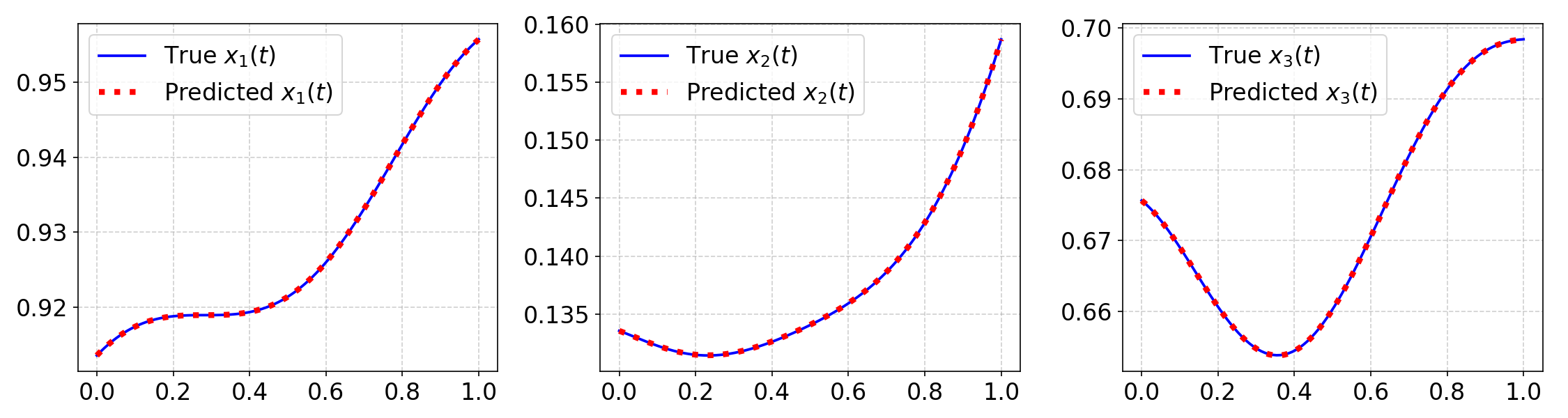}
	\end{minipage}
    
	\begin{minipage}{0.4\textwidth}
		\centering
		\includegraphics[width=\textwidth]{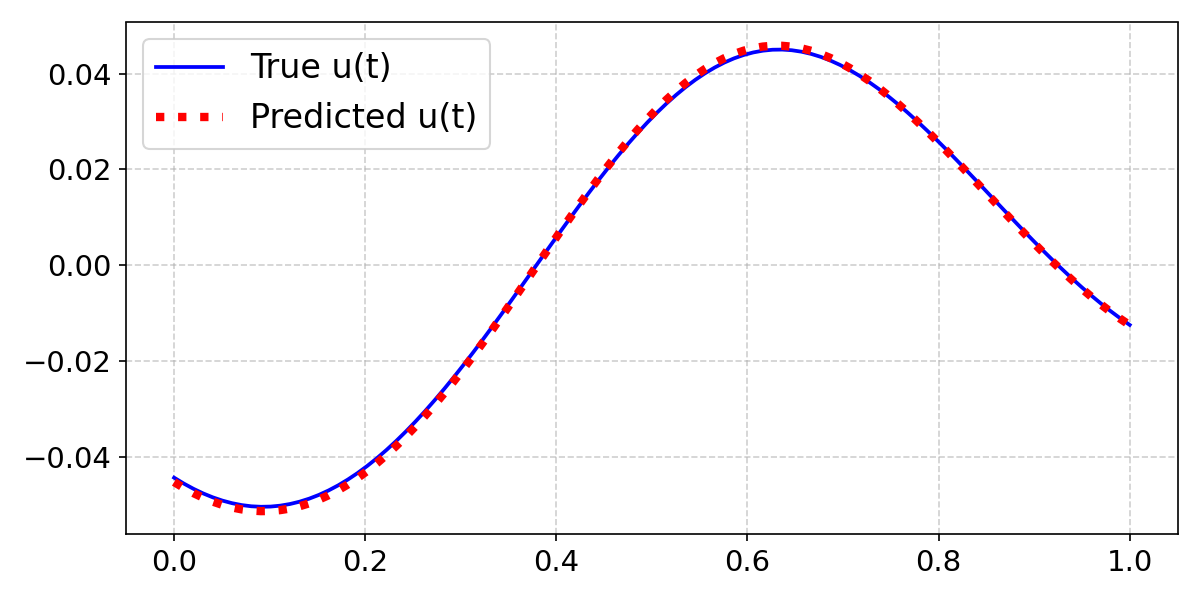}
	\end{minipage}
	\caption{Comparison of predicted and true trajectories (top) and predicted and tru controls (bottom) for a 3-d test system.}
	\label{fig3d}
\end{figure}




\subsection{Computational complexity and efficiency}

We conclude the numerical analysis by providing a unified assessment of the computational cost of the proposed approach, integrating the results obtained for both time-invariant (ARE) and time-dependent (DRE) settings. The objective is to quantify the cost of training the operator, the efficiency of its evaluation, and the resulting computational gain with respect to classical Riccati solvers.

Classical numerical methods for solving Riccati equations, such as CARE solvers in the time-invariant case and backward integration schemes for the differential Riccati equation, rely on dense linear algebra operations, including matrix factorizations and multiplications. As a consequence, their computational cost scales cubically with the state dimension, that is, of order $\mathcal{O}(n^3)$ per instance. When optimal control problems must be solved repeatedly for varying system parameters, the total cost therefore grows linearly with the number of instances, which becomes a major limitation in high-dimensional or real-time applications.

In contrast, the DeepONet-based framework follows an offline–online computational strategy. The offline phase consists of generating training data by solving multiple Riccati equations and optimizing the network parameters. This phase incurs a non-negligible one-time cost. Once the model is trained, however, the evaluation of the learned operator for a new parameter instance only requires a forward pass through the neural network. This inference step is computationally inexpensive and largely independent of the complexity of the underlying Riccati equation.

The magnitude of the offline training cost depends on both the problem dimension and the nature of the system parameters. The training time for all the configurations considered in our experiments a re collected in Table \ref{tab:offline_cost}.

\begin{table}[t]
    \centering
    \begin{tabular}{ccc}
        \toprule
        Learned solution & Dimension & Training time (s) \\
        \midrule
        
        \multirow{4}{*}{ARE}
        & $n=3$  & $42.63$ \\
        & $n=4$  & $271.81$ \\
        & $n=4$ progressive  & $61.81$ \\
        & $n=10$ & $382.82$ \\
        
        \midrule

        \multirow{2}{*}{DRE}
        & $n=3$  & $804.59$ \\
        & $n=10$ & $29951.34$ \\        
       
        \bottomrule
    \end{tabular}
    \caption{Offline training time for all the DeepONet models considered in our experiments.}
    \label{tab:offline_cost}
\end{table}

While in the case of the ARE the training time is relatively small, for the DRE the cost increases significantly due to the time-dependent nature of the problem, reaching a pick of approximately $8.3$ hours in dimension $n=10$. These results illustrate the higher computational burden associated with learning time-dependent operators, while remaining tractable within an offline setting.

Once trained, the computational advantage of the approach becomes evident at inference time. As reported in Tables \ref{time_compare_ARE} and \ref{time_compare_DRE}, the evaluation of the learned operator is up to several hundred times faster than solving the Riccati equation using classical CARE or RK4 routines. 

\begin{table}[t]
    \centering
    \begin{tabular}{c c c c}
        \toprule
        Dimension & Method & Time (ms) & Speedup \\
        \midrule
        
        \multirow{2}{*}{$n=3$}
        & CARE      & $0.51632$  & $1\times$ \\
        & DeepONet  & $0.00137$ & $376.86\times$ \\
        
        \midrule

        \multirow{2}{*}{$n=4$}
        & CARE      & $0.56849$ & $1\times$\\
        & DeepONet  & $0.00197$ & $287.72\times$\\
        & DeepONet-Progressive  & $0.00157$ & $360.58\times$\\
        
        \midrule
        
        \multirow{2}{*}{$n=10$}
        & CARE      & $0.87370$  & $1\times$ \\
        & DeepONet  & $0.00150$ & $582.47\times$ \\
        
        \bottomrule
    \end{tabular}
    \caption{Comparison of computational efficiency for solving ARE using CARE and DeepONet}
    \label{time_compare_ARE}
\end{table}

\begin{table}[th]
    \centering
    \begin{tabular}{c c c c}
        \toprule
        Dimension & Method & Time (ms) & Speedup \\
        \midrule
        \multirow{2}{*}{$n=3$} 
        & RK4        & $11.1564$  & $1\times$ \\
        & DeepONet   & $0.0065$   & $1723\times$ \\
        \midrule
        \multirow{2}{*}{$n=10$} 
        & RK4        & $52.2292$  & $1\times$ \\
        & DeepONet   & $0.2484$   & $210\times$ \\
        \bottomrule
    \end{tabular}
    \caption{Comparison of computational efficiency for solving DRE using RK4 and DeepONet}
    \label{time_compare_DRE}
\end{table}

This speedup is consistent across dimensions and reflects the fundamental difference between the two approaches: while classical solvers recompute the solution from scratch for each instance, the DeepONet model directly evaluates a learned mapping.

Overall, these results show that the proposed OL framework provides a substantial computational advantage in scenarios involving multiple problem instances. By shifting the main computational effort to an offline training phase and enabling fast evaluation at inference, the method allows for scalable and efficient approximation of Riccati solution operators across a wide range of system configurations. 

\section{Conclusions}\label{sec:conclusions}

In this work, we introduced an OL framework for the efficient approximation of the solution map associated with the differential Riccati equation arising in finite-horizon LQR problems. By recasting the Riccati equation as a nonlinear operator mapping time-dependent system parameters to matrix-valued trajectories, we leveraged DeepONets to learn this mapping in a data-driven yet structure-aware manner.

From a theoretical standpoint, we established a rigorous link between the approximation error of the learned operator and the performance of the induced feedback controller. In particular, we derived quantitative bounds on the suboptimality gap and proved that exponential stability of the closed-loop system is preserved under sufficiently accurate operator approximation. These results provide a principled framework to assess the reliability of neural operator approaches in optimal control.

On the computational side, we designed tailored DeepONet architectures capable of handling matrix-valued, time-dependent outputs, and introduced a progressive learning strategy to address scalability across dimensions. Numerical experiments demonstrated that the proposed approach achieves high accuracy, excellent stability preservation, and substantial computational speedups compared to classical Riccati solvers, especially in high-dimensional or repeated-query regimes.

Overall, our results highlight the potential of OL as a viable paradigm for amortized optimal control, enabling real-time evaluation of feedback laws in complex and parametric settings. Beyond the specific LQR setting, this work supports a broader perspective in which OL provides a systematic way to construct reusable surrogates for nonlinear differential equations arising in control and dynamical systems. This approach has the potential to significantly reduce computational cost in applications requiring repeated evaluation of parametrized models.

The results presented in this work open several avenues for further investigation at the interface of OL and optimal control.

\begin{itemize}
    \item[1. ] A first fundamental question concerns the generalization properties of neural operators in control settings. While our analysis provides a posteriori bounds linking the approximation error of the learned Riccati operator to the performance of the induced controller, a deeper understanding of generalization remains largely open. In particular, deriving a priori estimates on the sample complexity required to achieve a prescribed control accuracy, and characterizing how such requirements scale with the system dimension, the time horizon, and the regularity of the system coefficients, constitute important directions for future research.   
    \item[2.] Another relevant challenge is the design of structure-preserving learning architectures. The solution of the Riccati equation enjoys intrinsic properties such as symmetry and positive definiteness, which play a crucial role in ensuring stability of the closed-loop system. In the present work, these properties are only approximately satisfied by the learned operator, and violations may occur when the approximation error exceeds the threshold identified in Theorem \ref{thm:error}. Developing neural operator architectures that enforce such structural constraints by construction, for instance through suitable parameterizations of symmetric positive definite matrices or geometry-aware network designs, is a key open problem.
    \item[3.] Extending the proposed framework beyond the linear-quadratic setting also represents a natural and challenging direction. In particular, applying OL techniques to nonlinear optimal control problems would require approximating the solution operators of Hamilton–Jacobi–Bellman equations, which are fully nonlinear and may lack regularity. Similarly, incorporating state and control constraints, as in model predictive control, would significantly increase the complexity of the associated operators and raise new theoretical and computational issues.
    \item[4.] From the perspective of scalability, the progressive OL strategy introduced in this work provides an effective empirical approach to transfer knowledge across dimensions. However, its theoretical justification remains limited. An important open question is whether the Riccati solution operator admits a low-dimensional or dimension-independent representation, possibly in terms of reduced-order structures or compact embeddings of the parameter space. Establishing such properties would provide a rigorous foundation for scalable OL in high-dimensional control problems.
\end{itemize}

\section*{Acknowledgments}

The authors wish to thank Prof. Enrique Zuazua (Friedrich-Alexander-Universit\"at Erlangen-N\"urnberg, University of Deusto, and Universidad Aut\'onoma de Madrid) for valuable discussions and insightful suggestions that have significantly contributed to this work.

\appendix
\section{Proof of Theorem \ref{thm:error}}\label{app:proof}

\begin{proof} We prove the claims separately.

\medskip
\noindent\textbf{Trajectory error.} Define the optimal closed-loop matrix
\begin{align}\label{eq:closed_loop}
    A_{cl}^\ast(t)\coloneqq A(t) - B(t)K^\ast(t).
\end{align}
Using that
\begin{align*}
    \NO K(t) &= R^{-1}(t)B^\top(t)\NO P(t) = R^{-1}(t)B^\top(t) P^\ast(t) + R^{-1}(t)B^\top(t)\big(\NO P(t)-P^\ast(t)\big) 
    \\
    &= K^\ast(t) + R^{-1}(t)B^\top(t) E(t),    
\end{align*}
we can readily check that the error $e(t)\coloneqq \NO x(t) - x^\ast(t)$ follows the dynamics
\begin{align*}
    \begin{cases}
        \dot e(t) = A_{cl}^\ast(t)e(t) - B(t)R^{-1}(t)B^\top(t) E(t)\NO x(t), & t\in[0,T]
        \\
        e(0)=0. 
    \end{cases}
\end{align*}

Consider the Lyapunov function $V_e(t)\coloneqq e(t)^\top P^*(t)e(t)$. Differentiating along the solutions of \eqref{eq:Riccati} gives
\begin{align*}
    \dot V_e(t) &= \dot e^\top(t) P^\ast(t) e(t) +  e^\top(t) \dot P^\ast(t) e(t) + e^\top(t) P^\ast(t)\dot e(t) 
    \\
    &= e^\top(t) \dot P^\ast(t) e(t) + 2 e^\top(t) P^\ast(t) \dot e(t) 
    \\
    &= e^\top(t) \dot P^\ast(t) e(t) + 2 e^\top(t) P^\ast(t) \Big(A_{cl}^\ast(t) e(t) - B(t)R^{-1}(t)B^\top(t) E(t) \NO x(t)\Big) 
    \\
    &= e^\top(t) \dot P^\ast(t) e(t) + 2 e^\top(t) P^\ast(t) A_{cl}^\ast(t) e(t) - 2 e^\top(t) P^\ast(t) B(t)R^{-1}(t)B^\top(t) E(t) \NO x(t).
\end{align*}
Moreover, \eqref{eq:Riccati} and \eqref{eq:closed_loop} yield 
\begin{align*}
    & e^\top(t) \dot P^\ast(t) e(t)=e^\top(t) \Big(-A^\top(t) P^\ast(t)-P^\ast(t) A(t)+P^\ast(t) B(t)R^{-1}(t)B^\top(t) P^\ast(t)-Q(t)\Big)e(t), 
    \\
    & 2 e^\top(t) P^\ast(t) A_{cl}^\ast(t) e(t) = 2 e^\top(t) P^\ast(t)\Big(A(t)-B(t)R^{-1}(t)B^\top(t) P^\ast(t)\Big)e(t),
\end{align*}
so that we obtain 
\begin{align*}
    \dot V_e(t) = - e^\top(t) \Big(Q(t) + (K^\ast)^\top(t) R(t) K^\ast(t)\Big) e(t) - 2 e^\top(t) P^\ast(t) B(t)R^{-1}(t)B^\top(t) E(t)\NO x(t).
\end{align*}
Since $Q + (K^\ast)^\top R K^\ast \succeq 0$ uniformly on $[0,T]$, it follows that
\begin{align}\label{eq:V_dot_est}
    \dot V_e(t) \leq 2 \|P^\ast(t)\|_F \|B(t)\|_F^2 \|R^{-1}(t)\|_F \|E(t)\|_F \|e(t)\|_2 \|\NO x(t)\|_2. 
\end{align}
Now, by continuity of $P^\ast$ on $[0,T]$, there exist two positive constants $0<\gamma_\text{min}<\gamma_\text{max}$ such that
\begin{align*}
    \gamma_\text{min} I_n \preceq P^\ast(t) \preceq \gamma_\text{max} I_n.
\end{align*}
Hence $V_e \geq \gamma_\text{min} \|e\|_2^2$, which implies 
\begin{align}\label{eq:e_est}
    \|e\|_2 \leq \sqrt{\frac{V_e}{\gamma_\text{min}}}.    
\end{align}
Moreover, since $x^\ast$ is uniformly bounded on $[0,T]$, i.e., there exist constants $M_0>0$ such that
\begin{align*}
    \|x^\ast(t)\|_2 \leq M_0 \|x_0\|_2.
\end{align*}
From $\NO x(t)=x^\ast(t)+e(t)$ we therefore have 
\begin{align*}
    \|\NO x(t)\|_2 \leq \|x^\ast(t)\|_2+\|e(t)\|_2\leq M_0 \|x_0\|_2+\|e(t)\|_2.
\end{align*}
In view of this, we get from \eqref{eq:V_dot_est} that
\begin{align*}
    \dot V_e(t) \leq c_1 \|E(t)\|_F \|e(t)\|_2 \Big(M_0 \|x_0\|_2+\|e(t)\|_2\Big),
\end{align*}
where we have defined
\begin{align}\label{c1}
    c_1\coloneqq 2 \sup_{t\in[0,T]} \Big(\|B(t)\|^2_F \|R^{-1}(t)\|_F \|P^\ast(t)\|_F \Big).
\end{align}
Then, we obtain from \eqref{eq:e_est} that
\begin{align*}
    \dot V_e(t) &\leq c_1 \|E(t)\|_F \sqrt{\frac{V_e(t)}{\gamma_\text{min}}} \left(M_0 \|x_0\|_2+\sqrt{\frac{V_e(t)}{\gamma_\text{min}}}\right) \leq \alpha \|E(t)\|_F \sqrt{V_e(t)}+\beta \|E(t)\|_F V_e(t),
\end{align*}
where we have set 
\begin{align}\label{eq:alpha_beta}
    \alpha=\frac{c_1 M_0 \|x_0\|_2}{\sqrt{\gamma_\text{min}}} \quad\text{ and }\quad \beta=\frac{c_1}{\gamma_\text{min}}.    
\end{align}
This yields
\begin{align*}
    \frac{d}{dt} \sqrt{V_e(t)} =\frac{\dot V_e(t)}{2\sqrt{V_e(t)}} &\leq \frac{1}{2\sqrt{V_e(t)}} \left(\alpha \|E(t)\|_F \sqrt{V_e(t)}+\beta \|E(t)\|_F V_e(t)\right) 
    \\
    &=\frac{\alpha}{2}\|E(t)\|_F+\frac{\beta}{2}\|E(t)\|_F \sqrt{V_e(t)}.
\end{align*}
Setting $y(t)= \sqrt{V_e(t)}$, the above estimate becomes 
\begin{align*}
    \dot{y}(t)\leq \frac{\alpha}{2}\|E(t)\|_F+\frac{\beta}{2}\|E(t)\|_F y(t).
\end{align*}
Using the Gr\"onwall's lemma, combined with $y(0)=0$, we then obtain
\begin{align*}
    y(t) &\leq \frac{\alpha}{2}\int_0^t \exp\left(\frac{\beta}{2}\int_s^t\|E(\tau )\|_F\,d\tau\right)\|E(s)\|_F\,ds \leq \frac{\alpha \varepsilon}{2} \int_0^t e^{\frac{\beta}{2} \varepsilon (t-s)} \, ds
    \\
    &= \frac{\alpha}{\beta} \left( e^{\frac{\beta \varepsilon t}{2}} - 1 \right)=M_0\|x_0\|_2\sqrt{\gamma_\text{min}}\left( e^{\frac{\beta \varepsilon t}{2}} - 1 \right),
\end{align*}
with $\varepsilon$ given by \eqref{eq:error_small}. Then we get from \eqref{eq:e_est} and the definition \eqref{eq:alpha_beta} of $\beta$
\begin{align*}
    \|e(t)\|_2 \leq \sqrt{\frac{V_e(t)}{\gamma_\text{min}}}=\frac{y(t)}{\sqrt{\gamma_\text{min}}}\leq M_0\|x_0\|_2\left( e^{\frac{\beta \varepsilon t}{2}} - 1 \right) = M_0\|x_0\|_2\left( e^{\frac{c_1 \varepsilon t}{2\gamma_\text{min}}} - 1 \right),
\end{align*}
that is,  
\begin{align}\label{error}
    \sup_{t\in[0,T]}\|\NO x(t)-x^\ast(t)\|_2 \leq M_0\|x_0\|_2\left( e^{\frac{c_1 \varepsilon T}{2\gamma_\text{min}}} - 1 \right)
\end{align}
with $c_1$ given by \eqref{c1}.

\medskip 
\noindent\textbf{Cost error.} Consider now the Lyapunov function $V(t)\coloneqq \NO x(t)^\top P^\ast(t)\NO x(t)$. Differentiating along the trajectory $\NO x(t)$, and since $P^\ast(t)$ is symmetric, we get
\begin{align*}
    \dot V(t) &= \dot{\NO x}(t)^\top  P^\ast(t) \NO x(t) + \NO x(t)^\top \dot P(t)^\ast \NO x(t) + \NO x(t)^\top  P^\ast(t) \dot{\NO x}(t) 
    \\
    &= \NO x(t)^\top \dot P(t)^\ast \NO x(t) + 2 \NO x(t)^\top P^\ast(t)(A(t)\NO x(t)+B(t) \NO u(t)).
\end{align*}
The Riccati equation \eqref{eq:Riccati} then yields
\begin{align*} 
    \dot V(t) =&\, \NO x(t)^\top\Big(-A(t)^\top P^\ast(t)-P^\ast(t) A(t)+P^\ast(t) B(t)R^{-1}(t)B(t)^\top P^\ast(t)-Q(t)\Big)\NO x(t) 
    \\
    &+2\NO x(t)^\top P^\ast(t) A(t)\NO x(t) + 2\NO x(t)^\top P^\ast(t) B(t)\NO u(t).
\end{align*}
Since
\begin{align*}
    - &\NO x(t)^\top A(t)^\top P^\ast(t)\NO x(t) - \NO x(t)^\top P^\ast(t) A(t)\NO x(t) + 2\NO x(t)^\top P^\ast(t) A(t)\NO x(t) 
    \\
    &= -\Big(\NO x(t)^\top P^\ast(t) A(t)\NO x(t)\Big)^\top - \NO x(t)^\top P^\ast(t) A(t)\NO x(t) + 2\NO x(t)^\top P^\ast(t) A(t)\NO x(t) 
    \\
    &= - 2\NO x(t)^\top P^\ast(t) A(t)\NO x(t) + 2\NO x(t)^\top P^\ast(t) A(t)\NO x(t) = 0,
\end{align*}
we obtain from the previous identity that 
\begin{align*} 
    \dot V(t) = -\NO x(t)^\top Q(t)\NO x(t) + \NO x(t)^\top P^\ast(t) B(t)R^{-1}(t)B(t)^\top P^\ast(t)\NO x(t) + 2\NO x(t)^\top P^\ast(t) B(t)\NO u(t).
\end{align*}
Next, recalling that $K^\ast(t) =R^{-1}(t)B(t)^\top P^\ast(t)$, we have  
\begin{align*} 
    & (K^\ast(t))^\top R(t)K^\ast(t) =P^\ast(t) B(t)R^{-1}(t)B(t)^\top P^\ast(t) 
    \\
    & \NO u(t)^\top R(t)K^\ast(t)\NO x(t)=\NO x(t)^\top P^\ast(t) B(t)\NO u(t).
\end{align*}
Therefore
\begin{align*}
    \Big(\NO u(t) &+ K^\ast(t)\NO x(t)\Big)^\top R(t)\Big(\NO u(t)+K^\ast(t)\NO x(t)\Big) 
    \\
    &= \NO u(t)^\top R(t)\NO u(t) + 2\NO u(t)^\top R(t)K^\ast(t)\NO x(t) + \NO x(t)^\top (K^\ast(t))^\top R(t)K^\ast(t)\NO x(t) 
    \\
    &=\NO u(t)^\top R(t)\NO u(t) +2\NO x(t)^\top P^\ast(t) B(t)\NO u(t) + \NO x(t)^\top P^\ast(t) B(t)R^{-1}(t)B(t)^\top P^\ast(t)\NO x(t).
\end{align*}
Hence,
\begin{align*}
    \NO x(t)^\top & P^\ast(t) B(t)R^{-1}(t)B(t)^\top P^\ast(t)\NO x(t) + 2\NO x(t)^\top P^\ast(t) B(t)\NO u(t) 
    \\
    &= \Big(\NO u(t)+K^\ast(t)\NO x(t)\Big)^\top R(t)\Big(\NO u(t)+K^\ast(t)\NO x(t)\Big)-\NO u(t)^\top R(t)\NO u(t),
\end{align*}
and finally
\begin{align*}
    \dot V(t) = -\NO x(t)^\top Q(t)\NO x(t)-\NO u(t)^\top R(t)\NO u(t) + \Big(\NO u(t)+K^\ast(t)\NO x(t)\Big)^\top R(t)\Big(\NO u(t)+K^\ast(t)\NO x(t)\Big).
\end{align*}
Equivalently,
\begin{align*}
    \NO x(t)^\top Q(t)\NO x(t)+\NO u(t)^\top R(t)\NO u(t) = -\dot V(t) + \Big(\NO u(t)+K^\ast(t)\NO x(t)\Big)^\top R(t)\Big(\NO u(t)+K^\ast(t)\NO x(t)\Big)
\end{align*}
and integrating over $[0,T]$ yields
\begin{align*}
    \int_0^T & \Big(\NO x(t)^\top Q(t) \NO x(t) + \NO u(t)^\top R(t) \NO u(t)\Big)\, dt 
    \\
    &= V(0) - V(T) + \int_0^T \Big(\NO u(t) + K^\ast(t)\NO x(t)\Big)^\top R(t) \Big(\NO u(t) + K^\ast(t) \NO x(t)\Big)\,dt.
\end{align*}
Since $P^\ast(T)=P_T$, we then obtain
\begin{align*}
    J(\NO{u}) &= \NO x(T)^\top P_T \NO x(T) + \int_0^T \Big(\NO x(t)^\top Q(t) \NO x(t) + \NO u(t)^\top R(t) \NO u(t)\Big)\, dt 
    \\
    &= \NO x(T)^\top P_T \NO x(T) + V(0) - V(T) + \int_0^T \Big(\NO u(t) + K^\ast(t)\NO x(t)\Big)^\top R(t) \Big(\NO u(t) + K^\ast(t) \NO x(t)\Big)\,dt
    \\
    &= x_0^\top P^\ast(0)x_0 + \int_0^T \Big(\NO u(t) + K^\ast(t) \NO x(t)\Big)^\top R(t) \Big(\NO u(t) + K^\ast(t) \NO x(t)\Big)\, dt.
\end{align*}
On the other hand, for the optimal control $u^\ast(t)=-K^\ast(t)x^\ast(t)$, we have
\begin{align*}
    J(u^\ast) = x_0^\top P^\ast(0)x_0.
\end{align*}
Therefore,
\begin{align*}
    J(\NO u) - J(u^\ast) = \int_0^T \Big(\NO u(t) + K^\ast(t) \NO x(t)\Big)^\top R(t) \Big(\NO u(t) + K^\ast(t) \NO x(t)\Big)\, dt.
\end{align*}
But since $\NO u(t)=-\NO K(t)\NO x(t)$, it follows that 
\begin{align*}
    \NO u(t) + K^\ast(t) \NO x(t) = \Big(K^\ast(t) - \NO K(t)\Big)\NO x(t) = -R^{-1}(t) B(t)^\top E(t) \NO x(t),
\end{align*}
and thus
\begin{align*}
    J(\NO u) - J(u^\ast) &=  \int_0^T \NO x(t)^\top E(t)^\top B(t) R^{-1}(t) B(t)^\top E(t) \NO x(t) \, dt 
    \\
    &\leq  \int_0^T \|\NO x(t)\|_2^2 \|E(t)\|_F^2 \|B(t)\|_F^2 \|R^{-1}(t)\|_F \, dt.
\end{align*}
Moreover, from \eqref{error} we have that
\begin{align*}
    \|\NO x(t)\|_2 &\leq \|x^\ast(t)\|_2 + \|\NO x(t) - x^\ast(t)\|_2
    \\
    &\leq M_0 \|x_0\|_2+M_0\|x_0\|_2\left( e^{\frac{c_1 \varepsilon T}{2\gamma_\text{min}}} - 1 \right)=M_0\|x_0\|_2 e^{\frac{c_1 \varepsilon T}{2\gamma_\text{min}}}.
\end{align*}
Hence, using \eqref{eq:error_small}, we can conclude that
\begin{align*}
    J(\NO u) - J(u^\ast) \leq  C_3\varepsilon^2 Te^{C_4\varepsilon T}\|x_0\|^2_2,
\end{align*}
with
\begin{align*}
    C_3\coloneqq M_0^2\sup_{t\in[0,T]}\Big(\|B(t)\|_F^2 \|R^{-1}(t)\|_F\Big)\quad\text{ and }\quad C_4\coloneqq \frac{c_1}{\gamma_\text{min}}
\end{align*}

\medskip
\noindent\textbf{Stability of the closed}-\textbf{loop system.} Define the optimal and approximated closed-loop matrices 
\begin{align*}
    A_{cl}^\ast(t) = A(t)-B(t)K^\ast(t) \quad\text{ and }\quad \NO{A}_{cl}(t) = A(t)-B(t) \NO{K}(t),    
\end{align*}
and denote their difference
\begin{align*}
    \Delta A(t) \coloneqq \NO{A}_{cl}(t) - A_{cl}^\ast(t) = -B(t) R^{-1}(t) B(t)^\top E(t).
\end{align*}

Because $B\in C([0,T];\RR^{n\times m})$ and $R\in C([0,T];\Sigma_{++}^m)$, both $B$ and $R^{-1}$ are uniformly bounded on $[0,T]$. Therefore there exists a constant $C_0>0$, depending only on the problem data, such that
\begin{align*} 
    \|\Delta A(t)\|_F \leq C_0 \|E(t)\|_F, \quad\text{ for all } t\in[0,T],
\end{align*}
and we have from \eqref{eq:error_small}
\begin{align*} 
    \sup_{t\in[0,T]}\|\Delta A(t)\|_F \leq C_0\varepsilon.
\end{align*}

Moreover, since the nominal optimal closed-loop system $\dot x(t)=A_{cl}^\ast(t)x(t)$ is uniformly exponentially stable, there exist constants $M>0$ and $\mu_0>0$, depending only on the problem data, such that its solution operator $\Phi_\ast(t,s)$ satisfies
\begin{align}\label{eq:sol_operator}
    \|\Phi_\ast(t,s)\|_2\leq M e^{-\mu_0(t-s)}, \quad \text{ for all } 0\leq s\leq t\leq T.
\end{align}
Let now $\NO x$ solve the learned closed-loop system
\begin{align*} 
    \dot{\NO x}(t)=\NO A_{cl}(t)\NO x(t) = \Big(A_{cl}^\ast(t)+\Delta A(t)\Big)\NO x(t), \quad \NO x(0)=x_0.
\end{align*}
The variation-of-constants formula gives
\begin{align*} 
    \NO x(t) = \Phi_\ast(t,0)x_0 + \int_0^t \Phi_\ast(t,s)\Delta A(s)\NO x(s)\,ds.
\end{align*}
Using \eqref{eq:sol_operator} we therefore have
\begin{align*}
    \|\NO x(t)\|_2 \leq M e^{-\mu_0 t}\|x_0\|_2 + MC_0\varepsilon\int_0^t e^{-\mu_0(t-s)} \|\NO x(s)\|_2\,ds.
\end{align*}
Set
\begin{align*}
    y(t)\coloneqq e^{\mu_0 t}\|\NO x(t)\|_2.
\end{align*}
Multiplying the previous inequality by $e^{\mu_0 t}$, we obtain
\begin{align*}
    y(t) \leq M\|x_0\|_2 + MC_0\varepsilon \int_0^t y(s)\,ds.
\end{align*}
By Gr\"onwall’s inequality,
\begin{align*}
    y(t) \leq M\|x_0\|_2 e^{MC_0\varepsilon t}, \quad \text{ for all } t\in[0,T],
\end{align*}
that is, 
\begin{align*}
    \|\NO x(t)\|_2 \leq Me^{-(\mu_0 - MC_0\varepsilon)t}\|x_0\|_2, \quad \text{ for all } t\in[0,T].
\end{align*}
Finally, if we choose 
\begin{align*}
    \varepsilon^\ast\coloneqq \frac{\mu_\ast}{2M C_0},    
\end{align*}
then
\begin{align*}
    \mu\coloneqq \mu_0 - MC_0\varepsilon \geq \frac{\mu}{2}>0, 
\end{align*}
which immediately yields
\begin{align*}
    \|\NO x(t)\|_2 \leq M e^{-\mu t}\|x_0\|_2, \quad\text{ for all } t\in[0,T].
\end{align*}
\end{proof}

\nocite{*}    

\bibliographystyle{elsarticle-num} 
\bibliography{references} 

@book{abou2012matrix,
  title={{Matrix Riccati equations in control and systems theory}},
  author={Abou-Kandil, Hisham and Freiling, Gerhard and Ionescu, Vlad and Jank, Gerhard},
  year={2012},
  publisher={Birkh{\"a}user}
}

@article{alcala2018autonomous,
	title={{Autonomous vehicle control using a kinematic Lyapunov-based technique with LQR-LMI tuning}},
	author={Alcala, Eugenio and Puig, Vicen{\c{c}} and Quevedo, Joseba and Escobet, Teresa and Comasolivas, Ramon},
	journal={Control Eng. Pract.},
	volume={73},
	pages={1--12},
	year={2018},
	publisher={Elsevier}
}

@book{anderson2007optimal,
	title={{Optimal control: linear quadratic methods}},
	author={Anderson, Brian DO and Moore, John B},
	year={2007},
	publisher={Courier Corporation}
}

@book{bertsekas2012dynamic,
  title={{Dynamic programming and optimal control: Volume I}},
  author={Bertsekas, Dimitri},
  volume={4},
  year={2012},
  publisher={Athena scientific}
}

@book{bittanti1991riccati,
  title={The {R}iccati equation},
  author={Bittanti, Sergio and Laub, Alan and Willems, Janc},  
  year={1991},
  publisher={Springer-Verlag}
}

@article{choi1999lqr,
  title={{LQR design with eigenstructure assignment capability and application to aircraft flight control}},
  author={Choi, Jae Weon and Seo, Young Bong},
  journal={IEEE Trans. Aerosp. Electron. Syst.},
  volume={35},
  number={2},
  pages={700--708},
  year={1999},
  publisher={IEEE}
}

@misc{coddington1956theory,
  title={{Theory of ordinary differential equations}},
  author={Coddington, Earl A and Levinson, Norman and Teichmann, T},
  year={1956},
  publisher={American Institute of Physics}
}

@article{cui2025learning,
  title={{Learning-based adaptive optimal control of linear time-delay systems: A value iteration approach}},
  author={Cui, Leilei and Pang, Bo and Krsti{\'c}, Miroslav and Jiang, Zhong-Ping},
  journal={Automatica},
  volume={171},
  pages={111944},
  year={2025},
  publisher={Elsevier}
}

@article{deng2022approximation,
  title={{Approximation rates of DeepONets for learning operators arising from advection-diffusion equations}},
  author={Deng, Beichuan and Shin, Yeonjong and Lu, Lu and Zhang, Zhongqiang and Karniadakis, George Em},
  journal={Neur. Netw.},
  volume={153},
  pages={411--426},
  year={2022},
  publisher={Elsevier}
}

@article{dhingra2025modeling,
  title={{Modeling and LQR control of insect sized flapping wing robot}},
  author={Dhingra, Daksh and Kaheman, Kadierdan and Fuller, Sawyer B},
  journal={npj Robotics},
  volume={3},
  number={1},
  pages={6},
  year={2025},
  publisher={Nature Publishing Group UK London}
}

@article{elkhatem2022robust,
	title={{Robust LQR and LQR-PI control strategies based on adaptive weighting matrix selection for a UAV position and attitude tracking control}},
	author={Elkhatem, Aisha Sir and Engin, Seref Naci},
	journal={Alex. Eng. J.},
	volume={61},
	number={8},
	pages={6275--6292},
	year={2022},
	publisher={Elsevier}
}

@article{fong2018dual,
  title={{Dual-loop iterative optimal control for the finite horizon LQR problem with unknown dynamics}},
  author={Fong, Justin and Tan, Ying and Crocher, Vincent and Oetomo, Denny and Mareels, Iven},
  journal={Syst. Control Lett.},
  volume={111},
  pages={49--57},
  year={2018},
  publisher={Elsevier}
}

@article{gu2006generalized,
  title={{Generalized LQR control and Kalman filtering with relations to computations of inner-outer and spectral factorizations}},
  author={Gu, Guoxiang and Cao, Xi-Ren and Badr, Hesham},
  journal={IEEE Trans. Automat. Control},
  volume={51},
  number={4},
  pages={595--605},
  year={2006},
  publisher={IEEE}
}

@article{he2016dual,
  title={{Dual-mode nonlinear MPC via terminal control laws with free-parameters}},
  author={He, Defeng},
  journal={IEEE/CAA J. Autom. Sin.},
  volume={4},
  number={3},
  pages={526--533},
  year={2016},
  publisher={IEEE}
}

@article{kalabic2020constraint,
  title={{A constraint-separation principle in model predictive control}},
  author={Kalabi{\'c}, Uro{\v{s}} V and Kolmanovsky, Ilya V},
  journal={Automatica},
  volume={121},
  pages={109190},
  year={2020},
  publisher={Elsevier}
}

@book{kirk2004optimal,
  title={{Optimal control theory: an introduction}},
  author={Kirk, Donald E},
  year={2004},
  publisher={Courier Corporation}
}

@article{klemm2020lqr,
  title={{LQR-assisted whole-body control of a wheeled bipedal robot with kinematic loops}},
  author={Klemm, Victor and Morra, Alessandro and Gulich, Lionel and Mannhart, Dominik and Rohr, David and Kamel, Mina and de Viragh, Yvain and Siegwart, Roland},
  journal={IEEE Robot. Autom. Lett.},
  volume={5},
  number={2},
  pages={3745--3752},
  year={2020},
  publisher={IEEE}
}

@article{kumar2013robust,
  title={{Robust LQR controller design for stabilizing and trajectory tracking of inverted pendulum}},
  author={Kumar, E Vinodh and Jerome, Jovitha},
  journal={Procedia Eng.},
  volume={64},
  pages={169--178},
  year={2013},
  publisher={Elsevier}
}

@book{kwakernaak1972linear,
	title={{Linear optimal control systems}},
	author={Kwakernaak, Huibert and Sivan, Raphael},
	volume={1},
	year={1972},
	publisher={Wiley-interscience New York}
}

@book{lancaster1995algebraic,
  title={Algebraic {R}iccati equations},
  author={Lancaster, Peter and Rodman, Leiba},
  year={1995},
  publisher={Clarendon press}
}

@article{lanthaler2022error,
	title={{Error estimates for deeponets: A deep learning framework in infinite dimensions}},
	author={Lanthaler, Samuel and Mishra, Siddhartha and Karniadakis, George E},
	journal={Trans. Math. Appl.},
	volume={6},
	number={1},
	pages={1--141},
	year={2022},
	publisher={Oxford University Press}
}

@article{lu2021learning,
	title={{Learning nonlinear operators via DeepONet based on the universal approximation theorem of operators}},
	author={Lu, Lu and Jin, Pengzhan and Pang, Guofei and Zhang, Zhongqiang and Karniadakis, George Em},
	journal={Nature Mach. Intell.},
	volume={3},
	number={3},
	pages={218--229},
	year={2021},
	publisher={Nature Publishing Group UK London}
}

@article{mohammed2018active,
  title={{Active vibration control of cantilever beam by using optimal LQR controller}},
  author={Mohammed, Hadeer Abd UL-Qader and Wasmi, Hatem Rahem},
  journal={J. Eng.},
  volume={24},
  number={11},
  pages={1--17},
  year={2018}
}

@article{peng2025linear,
  title={{Linear quadratic regulator-based coordinated optimization for harmonic compensation in multifunctional grid-connected inverters}},
  author={Peng, Xianghua and Yin, Jingyuan and Yu, Jiancheng and Song, Jinzhao and Liu, Keyan and Li, Zhao and Wei, Tongzhen},
  journal={Int. J. Electr. Power Energy Syst.},
  volume={172},
  pages={111333},
  year={2025},
  publisher={Elsevier}
}

@inproceedings{quero2024physics,
  title={{Physics-informed machine learning for uav control}},
  author={Quero, Carlos Alexander Osorio and Martinez-Carranza, Jose},
  booktitle={2024 21st International Conference on Electrical Engineering, Computing Science and Automatic Control (CCE)},
  pages={1--6},
  year={2024},
  organization={IEEE}
}

@article{raissi2019physics,
  title={{Physics-informed neural networks: A deep learning framework for solving forward and inverse problems involving nonlinear partial differential equations}},
  author={Raissi, Maziar and Perdikaris, Paris and Karniadakis, George E},
  journal={J. Comput. Phys.},
  volume={378},
  pages={686--707},
  year={2019},
  publisher={Elsevier}
}

@article{reddy2024learning,
  title={{Learning-based optimal control of linear time-varying systems over large time intervals}},
  author={Reddy, Vasanth and Boker, Almuatazbellah and Eldardiry, Hoda},
  journal={Syst. Control Lett.},
  volume={185},
  pages={105750},
  year={2024},
  publisher={Elsevier}
}

@article{singh2015decentralized,
  title={{Decentralized control of oscillatory dynamics in power systems using an extended LQR}},
  author={Singh, Abhinav Kumar and Pal, Bikash C},
  journal={IEEE Trans. Power Syst.},
  volume={31},
  number={3},
  pages={1715--1728},
  year={2015},
  publisher={IEEE}
}

@article{slimane2025deep,
  title={{Deep reinforcement learning LQR controller design for MIMO systems applied to gas production facility}},
  author={Slimane, Kamel Ben and Tmar, Zied and Besbes, Mongi},
  journal={Int. J. Autom. Control},
  volume={19},
  number={6},
  pages={669--704},
  year={2025},
  publisher={Inderscience Publishers (IEL)}
}

@book{sontag2013mathematical,
 	title={{Mathematical control theory: deterministic finite dimensional systems}},
 	author={Sontag, Eduardo D},
 	volume={6},
 	year={2013},
 	publisher={Springer Science \& Business Media}
 }

@book{stevens2015aircraft,
  title={{Aircraft control and simulation: dynamics, controls design, and autonomous systems}},
  author={Stevens, Brian L and Lewis, Frank L and Johnson, Eric N},
  year={2015},
  publisher={John Wiley \& Sons}
}

@article{su2024lqr,
  title={{LQR-based control strategy for improving human--robot companionship and natural obstacle avoidance}},
  author={Su, Zefan and Yao, Hanchen and Peng, Jianwei and Liao, Zhelin and Wang, Zengwei and Yu, Hui and Dai, Houde and Lueth, Tim C},
  journal={Biomim. Intell. Robot.},
  volume={4},
  number={4},
  pages={100185},
  year={2024},
  publisher={Elsevier}
}

@book{trelat2005controle,
  title={{Contr{\^o}le optimal: th{\'e}orie \& applications}},
  author={Tr{\'e}lat, Emmanuel},
  volume={36},
  year={2005},
  publisher={Vuibert Paris}
}

@article{xu2018optimal,
  title={{Optimal control data scheduling with limited controller-plant communication}},
  author={Xu, Jiapeng and Wen, Chenglin and Xu, Daxing},
  journal={Sci. China Inf. Sci.},
  volume={61},
  number={1},
  pages={012202},
  year={2018},
  publisher={Springer}
}

@article{yu2026stochastic,
  title={{Stochastic Linear Quadratic Optimal Control for Continuous-Time Systems via Reinforcement Learning}},
  author={Yu, Jianglin and Wang, Bing-Chang and Meng, Deyuan},
  journal={Int. J. Robust Nonlinear Control},
  volume={36},
  number={4},
  pages={1876--1887},
  year={2026},
  publisher={Wiley Online Library}
}

@book{zhou1996robust,
  title={{Robust and optimal control}},
  author={Zhou, Kemin and Doyle, John Comstock and Glover, Keith and others},
  volume={40},
  year={1996},
  publisher={Prentice hall New Jersey}
}
\end{document}